\newcommand{\R}{\mathbb R}
\newcommand{\Rn}{\mathbb R^n}
\newcommand{\RnN}{\mathbb R^{nN}}
\newcommand{\Rnn}{\mathbb R^{n\times n}}
\newcommand{\N}{\mathbb N}
\declaretheoremstyle[
  bodyfont=\normalfont\itshape,
  headformat=\NAME\NUMBER  
]{}
\newenvironment{assumption}[1]{%
  \manualtheoreminner
}{\endmanualtheoreminner}
\newtheorem{alg}{Algorithm}[section]
\newtheorem{lemma}{Lemma}[section]
\newtheorem{theorem}{Theorem}[section]
\newtheorem{remark}{Remark}[section]
\newtheorem{corollary}{Corollary}[section]
\newcommand{\tr }{^\top}
\DeclareMathOperator*{\dia}{diag}
\newcommand{\calG}{\mathcal G}
\newcommand{\calK}{\mathcal K}
\newcommand{\calE}{\mathcal E}
\newcommand{\calV}{\mathcal V}
\newcommand{\calW}{\mathcal W}
\newcommand{\xbf}{\mathbf x}
\newcommand{\rbf}{\mathbf r}
\newcommand{\zbf}{\mathbf z}
\newcommand{\gbf}{\mathbf g}
\newcommand{\dbf}{\mathbf d}
\newcommand{\ybf}{\mathbf y}
\newcommand{\bbf}{\mathbf b}
\newcommand{\vbf}{\mathbf v}
\newcommand{\lr}[1]{\left(#1\right)}
\newcommand{\be}{\begin{equation}}
\newcommand{\ee}{\end{equation}}
\begin{document}

\title{Distributed Inexact Newton Method with Adaptive Step Sizes}
\author{Du\v{s}an Jakoveti\'c\footnote{Department of Mathematics and Informatics, Faculty of Sciences, University of Novi Sad, Trg Dositeja
Obradovi\'ca 4, 21000 Novi Sad, Serbia, Email: dusan.jakovetic@dmi.uns.ac.rs, natasak@uns.ac.rs, The work of D. Jakoveti\'c and N. Kreji\'c  was supported by the Science Fund of the Republic of Serbia, Grant no. 7359, Project LASCADO.},  Nata\v{s}a Kreji\'c\footnotemark[1],   Greta Malaspina\footnote{Dipartimento di Ingegneria Industriale, Università degli studi di Firenze, Viale G.B. Morgagni 40,
50134 Firenze, Italy. Member of the INdAM Research Group GNCS.
 Email: greta.malaspina@unifi.it. This work is partially supported by the BIGMATH project which has received funding by the European Union’s Horizon 2020 research and innovation programme under the Marie Sk\l{}odowska-Curie Grant Agreement no. 812912, and by Partenariato esteso FAIR ``Future Artificial Intelligence Research'' SPOKE 1 Human-Centered AI. Obiettivo 4, Project ``Mathematical and Physical approaches to innovative Machine Learning technologies (MaPLe)''.
}
}
\date{}

\maketitle
\begin{abstract}
We consider two formulations for distributed optimization wherein $N$ nodes in a generic connected network solve a problem of common interest: distributed personalized optimization  and consensus optimization. A new method termed DINAS (Distributed Inexact Newton method with Adaptive step size) is proposed. DINAS employs large adaptively computed step sizes, requires a reduced global parameters knowledge with respect to existing alternatives, and can operate without any local Hessian inverse calculations nor Hessian communications. When solving personalized distributed learning formulations, DINAS achieves quadratic convergence with respect to computational cost and  linear convergence with respect to communication cost, the latter rate being independent of the local functions condition numbers or of the network topology. When solving consensus optimization problems, DINAS is shown to converge to the global solution. Extensive numerical experiments demonstrate significant improvements of DINAS over existing alternatives.  As a result of independent interest, we provide for the first time convergence analysis of the Newton method with the adaptive Polyak's step size when the Newton direction is computed inexactly in centralized environment.\\

\end{abstract}

\section{Introduction}
We consider distributed optimization problems 
where a network of $N$ computational nodes, interconnected by a generic undirected 
connected network, collaborate in order to solve a problem of common interest. Specifically, 
we consider the following two widely used problem formulations:
\be 
\label{eq:penalty-intro}
\min \sum_{i=1}^{N} f_i(x_i) + \frac{1}{2 \beta} \sum_{\{i,j\} \in \calE} w_{ij} \|x_i-x_j\|^2
\ee
\be  \label{eq:minsum_newton}
\min f(y) = \sum_{i=1}^{N} f_i(y).
\ee
Here, $\|\cdot\|$ denotes the Euclidean norm; $f_i: \mathbb{R}^n \to \mathbb{R}$ is a local cost function available to node $i$; 
$x_i \in {\mathbb R}^n$, $i=1,...,n$, and $y \in {\mathbb R}^n$ are optimization variables; 
$i=1,...,N$; $\calE$ denotes the set of undirected edges; $\beta>0$ is a parameter; and $w_{ij}>0$, $\{i,j\} \in \calE$,
are positive constants.  
 Formulation \eqref{eq:penalty-intro} arises, e.g., in personalized decentralized machine learning, e.g., \cite{DJAM, ribeiro, variable, ErminWei,Bellet}; see also \cite{Richtarik} for a related formulation in federated learning settings. 
 Therein, each node $i$ wants to find its personalized local model $x_i$ by fitting it against its local data-based loss function $f_i$. In typical scenarios, 
 the data available to other nodes $j \neq i$ are also useful to node $i$. One way to enable node $i$ 
 learn from node $j$'s local data is hence to impose a similarity between the different 
 nodes' models. This is done in \eqref{eq:penalty-intro} by adding a quadratic penalty term, 
 where parameter $w_{ij}>0$ encodes the ``strength'' of the similarity between 
 node $i$'s and node $j$'s models.\footnote{In order to simplify the presentation, we 
 let the $w_{ij}$'s satisfy $\sum_{j \neq i} w_{ij}<1$ and also introduce an 
 arbitrary common multiplicative constant $\beta>0$.} The quantities $w_{ij}$'s 
 also respect the sparsity pattern of the underlying network, i.e., they are non-zero only 
 for the node pairs $i,j$ that can communicate directly, i.e., for those pairs 
 $i,j$ such that $\{i,j\} \in \calE.$ Throughout the paper, we refer to formulation \eqref{eq:penalty-intro} as 
 distributed personalized optimization, or penalty formulation. 

Formulation \eqref{eq:minsum_newton} has also been extensively studied, e.g., \cite{DG,dusan,DQNSIAM, cdc-submitted,arxivVersion,harnessing,  diging,extra, lessard,xu}.  
It is related with \eqref{eq:penalty-intro} but, unlike 
\eqref{eq:penalty-intro}, there is a global model (optimization variable) 
$y \in {\mathbb R}^n$ common to all nodes, i.e., the nodes
 want to reach a consensus (agreement) on the model. 
 Formulation \eqref{eq:penalty-intro} may be seen as an inexact penalty variant  
 of \eqref{eq:minsum_newton}, with the increasing approximation accuracy as 
 $\beta$ becomes smaller. This connection between the two formulations 
  has also been exploited in prior works, e.g., \cite{NetworkNewton},  
  in order to develop new methods for solving~\eqref{eq:minsum_newton}. 
  In other words, once a distributed solver for   \eqref{eq:penalty-intro} is available,
  one can also solve~\eqref{eq:minsum_newton} approximately
   by taking a small $\beta$, or exactly by solving a sequence of 
   problems \eqref{eq:minsum_newton} with an appropriately tuned decreasing sequence of 
   $\beta$'s. We refer to formulation \eqref{eq:minsum_newton} as distributed consensus 
   optimization, or simply consensus optimization.
   
In this paper, we also follow this path and first devise a method for solving 
\eqref{eq:penalty-intro} with a fixed $\beta$; we then capitalize on the latter to derive solvers for 
\eqref{eq:minsum_newton}.

There is a vast literature for solving problems of type \eqref{eq:minsum_newton} and 
\eqref{eq:penalty-intro}. We first discuss formulation  \eqref{eq:minsum_newton}. In general one can distinguish between several classes of methods. To start, a distributed gradient (DG) method has been considered in \cite{DG}. 
Given that the gradient method with fixed step size cannot achieve exact convergence, a number of methods, belonging to the class of gradient tracking methods  which assume that additional information is shared among nodes, are proposed and analysed,  \cite{dusan,DQNSIAM, cdc-submitted,arxivVersion,harnessing,  diging,extra, lessard,xu}.   In these methods the objective function is assumed to be   convex with Lipschitz-continuos gradient, the gradient tracking scheme is employed with a fixed steps size, and the exact convergence to is reached. The step size required in order to ensure convergence needs to be smaller than a threshold that depends on global constants such as strong convexity and gradient's Lipschitz constant; the constant also needs to be known  by all nodes beforehand.

 In \cite{diging2,xu,Khan1,Khan2,Sayed1}, the case of uncoordinated time-invariant step sizes is considered. 

In \cite{dsg}, a modification of the gradient tracking method, with step sizes varying both across nodes and across iterations is proposed.  Time varying networks are considered in \cite{diging, Khan3}. Again, the step sizes in all of these methods depend on some global constants (like strong convexity and Lipschitz). The  actual implementation is subject to estimation of these constants and  very often leads to costly hand-tuning of step size parameters. 

If the optimization problem of interest is ill-conditioned, first order methods might be very slow, and second order methods appear as an attractive alternative.  When considering second order methods in a distributed and large-scale framework, there are a few issues that need to be addressed. First, as in the classical (centralized) optimization framework, Hessian-based methods are typically quite expensive: while in the distributed framework the work of computing the Hessian is shared by the nodes and therefore may not represent a major issue, solving the linear system typically required at each iteration  to compute the Newton-like direction may be prohibitively expensive for problems of very large sizes. Secondly, while we normally assume that nodes can share the local vector of variables and the local gradients (or some other vectors of magnitudes comparable to the variable dimension $n$), sharing the Hessian cause communication costs quadratic in $n$. 
Finally,  one would like  to have superlinear or quadratic convergence for a second order method. However, in a distributed framework when solving \eqref{eq:minsum_newton} or \eqref{eq:penalty-intro}, there are two main obstacles: reaching consensus, and the choice of the step size. 
These issues are addressed in the literature in several ways so far. 

We first discuss second order methods that solve \eqref{eq:penalty-intro}. In \cite{NetworkNewton} the authors propose a distributed version of the classical Newton method, dubbed Network Newton -- NN, that relies on a truncated Taylor expansion of the inverse Hessian matrix. 
NN solves exactly \eqref{eq:penalty-intro} and yields an inexact solution of \eqref{eq:minsum_newton}.  The approximation of the inverse Hessian with NN   involves inversion of local (possibly dense) Hessians of the $f_i$'s. 
Linear convergence of the method to the solution of \eqref{eq:penalty-intro} is proved if standard assumptions for the Newton method hold, for a suitable choice of the step size.
 In \cite{RNN-k}  a modification of NN method is proposed, that employs a reinforcement strategy.

Several second order methods are proposed that solve~\eqref{eq:minsum_newton}. In \cite{primdual} the direction is computed using the same approximation strategy as in NN, but within primal-dual strategy. The method proposed in  \cite{JKK} relies on primal-dual framework as well but computes the direction inexactly.  
Thus, the method avoids any kind of matrix inversion and relies on a fixed point iterative method for computing the search direction. This leads to favorable performance for problems with a relatively large $ n. $
In \cite{Ntrack} the gradient-tracking strategy is extended to the case of the Newton method.

Current second order methods that solve \eqref{eq:penalty-intro} such as \cite{NetworkNewton} and \cite{ErminWei} 
 converge at best linearly to the solution of \eqref{eq:penalty-intro}.\footnote{It has been shown that the methods in \cite{NetworkNewton} and \cite{ErminWei} exhibit an error decay of a quadratic order only for a 
 bounded number of iterations, while the actual convergence rate is linear.} Moreover, 
 the step size by which a Newton-like step is 
 taken depends on several global constants and is usually very small, leading to slow convergence in applications.   
 The choice of the step size is of fundamental importance  for a method to achieve both global convergence and fast local convergence. Classical line-search strategies, which effectively solve this issue in the centralized framework, are not applicable in the distributed setting as they require knowledge of the value of the global function at all nodes and may require several evaluation at each iteration, which would yield prohibitively large communication traffic. In \cite{flooding} the authors propose a method termed DAN,  
 \cite{flooding} proposes a method for solving \eqref{eq:minsum_newton}, but it may be generalized to 
solving \eqref{eq:penalty-intro} such that similar convergence rate guarantees hold order-wise. The method employs a finite-time communication strategy to share the local hessians
through the whole network and makes use of the step size proposed in \cite{Polyak} for the Newton method in the centralized case, which ensures both global convergence and fast local convergence. 
Given that sharing all local hessians among nodes might be too expensive for problems of even moderate size, a version of DAN that employs and shares through the whole network rank one approximations of the local hessians is proposed.  The step size is again defined by the global constants such as strong convexity and Lipschitz constants. In \cite{daneshmand} the authors propose a distributed Newton method with cubic regularization for solving empirical risk minimization problems. 
In \cite{Berglund} the authors propose a Distributed Newton method for the solution of quadratic problems that requires the nodes to share both vectors and matrices, and prove that the convergence on the method depends only on the logarithm of the condition number of the problem. In particular they prove that sharing second order information can improve the performance of distributed methods when the condition number of the problem is large. In \cite{liu2022} the authors combine the idea of sharing compressed Hessian information with that of performing several rounds of consensus at each iteration of the method, then show that the method they propose achieves asymptotically superlinear convergence when the number of consensus rounds at every iterations tends to infinity.\\

In summary, current second order methods for solving \eqref{eq:penalty-intro} suffer from at least one of the following drawbacks. Only linear convergence rate is achieved, both with respect to the number of local functions $f_i$'s gradient and Hessian evaluations, and with respect to the number of inter-node communication rounds. The methods require knowledge of global constants beforehand in order for step size to be set, usually leading to small step size choices and slow convergence. If a superlinear or quadratic convergence is reached, this comes at a $O(n^2)$ cost of local Hessians' communication.\\

\textbf{Contributions}. We propose a new method termed DINAS (Distributed Inexact Newton method with Adaptive step size) 
that can solve both \eqref{eq:penalty-intro} and \eqref{eq:minsum_newton} formulations. 
 When solving~\eqref{eq:penalty-intro}, DINAS achieves linear, superlinear  or quadratic convergence to the solution in terms of the number of outer iterations. In more detail, 
the method achieves, in a best scenario, a quadratic convergence with respect to the number of local gradient and local Hessian evaluations, and a linear convergence with respect to the number of communication rounds. Moreover, the communication-wise linear convergence rate is independent of local functions' condition numbers or of the underlying network topology, and it is hence significantly improved over the linear rates achieved in the literature so far. DINAS uses an adaptive step size that can be computed at each iteration with a reduced knowledge of global constants required when compared with existing alternatives. It computes the search direction by using a suitable linear solver, i.e., an approximate solution  of the Newton linear system is computed in a distributed way at each iteration via an iterative solver. To make the presentation easier to follow we specify here the method with Jacobi Overrelaxation, JOR,  as the linear solver of choice, but any other linear solver that is implementable in the distributed environment can be applied. After computation of the approximate Newton direction of suitable level of inexactness, the step size is computed based on the progress in gradient decrease, and the new iteration is computed. The step size  computation is an adaptation to the distributed framework of the step size presented in \cite{Polyak} for the classical Newton method. This way we achieve global convergence of the method. Furthermore, adaptive step size allows us to maintain fast local convergence. The rate of convergence is controlled by the forcing terms as in the classical Inexact Newton methods, \cite{INmethod}, depending on the quality of approximation of the Newton direction obtained by JOR method. Thus one can balance the cost and convergence rate depending on the desired precision and adjust the forcing term during the iterative process. 
 The fact that we do not need any matrix inversion and solve the linear system only approximately reduces the computational cost in practice by several orders of magnitude, in particular if the dimension $ n $ is relatively large.  

For formulation \eqref{eq:minsum_newton}, we employ DINAS by solving a sequence of problems \eqref{eq:penalty-intro} with an appropriately chosen decreasing sequence of penalty parameters $\beta$, and show that the overall scheme converges to the solution of \eqref{eq:minsum_newton}. 
While we do not have theoretical convergence rate guarantees for solving \eqref{eq:minsum_newton}, extensive numerical experiments demonstrate significant improvements over state-of-the-art distributed second order solvers of 
 \eqref{eq:minsum_newton}. The results also confirm that DINAS is particularly efficient for large dimensional (large $ n$) settings. 

Finally, an important byproduct of the convergence theory for the distributed case developed here is the convergence of the classical (centralized)  Newton method with the Polyak adaptive step size when the Newton directions are computed inexactly. To the best of our knowledge, only the exact Newton method with the Polyak step size has been analyzed to date.

This paper  is organized as follows. 
In Section 2 we describe the distributed optimization framework that we consider and formulate the  assumptions. The DINAS algorithm is presented and discussed  in Section 3. The analysis of DINAS for formulation \eqref{eq:penalty-intro} is given in Section 4.  
 The analysis of the centralized Newton method with the Polyak step size and inexact Newton-like updates is 
 given in Section 5, while the analysis for formulation \eqref{eq:minsum_newton} 
is presented in Section 6.  
Numerical results are provided in Section 7. We offer some conclusions in  Section 8.

\section{Model and preliminaries}

Assume that a network $\calG = \left(\calV,\calE\right)$ of computational nodes is such that node $i$ holds the function $f_i$ and can communicate directly with all its neighbors in $\mathcal G$. Moreover, assume that each node hold a local vector of variables $\xbf_i\in\Rn. $ The properties of the communication network are stated in the assumption below. 
\begin{assumption}{A1}\label{ass:network} The network $\calG = (\calV,\calE)$ is undirected, simple and connected, and it has self-loops at every node, i.e.,  $(i,i)\in\calE$ for every $i=1,\dots,N$.
\end{assumption}

We associate the communication matrix $ W $ to the graph $ \calG $ as follows.

\begin{assumption}{A2}\label{ass:consensus} The matrix $W=[w_{ij}] \in\R^{N\times N}$ is symmetric and doubly stochastic such that if 
 $(i,j)\notin\calE$ then $w_{ij}=0, w_{ij} \neq 0, (i,j) \in\calE $  and $ w_{ii} \neq 0, i=1,\ldots,N. $
\end{assumption}
Double stochastic matrix means that all elements of the matrix are nonnegative and they sum up to 1 by columns and rows, i.e. $ \sum_{i=1}^n w_{ij} =1, \sum_{j=1}^{n} w_{ij} =1.  $ Hence $ w_{ij} \in [0,1]. $
Given the communication matrix $ W$ that satisfies Assumption \ref{ass:consensus} we can associate each node $ i $ with its neighbors $ j \in O_i.  $ Clearly $ w_{ij} \neq 0 $ if $ j \in O_i,  $ $ w_{ii} \neq 0 $ and  $ \sum_{j \in O_i} w_{ij}+ w_{ii} = 1. $ The communication matrix is also called consensus matrix.  

The method we propose here is a Newton-type method, hence we assume the standard properties of the objective function. We use the symbol $ \prec $ to denote the following matrix property: $ A \prec B $ means that $ A-B $ is a positive definite matrix. 
\begin{assumption}{A3} \label{ass:objetive}  Let each $ f_i,  i=1,\ldots, N $ be a two times continuously differentiable, strongly  convex function such that for some $ 0 < \mu < M  $ there holds
\be \label{f:property1}
\mu I \prec \nabla^2 f_i(y) \prec M I.  
\ee
The Hessian is Lipschitz continuous and for some $ L > 0 $ and all $ y, z \in \Rn $ we have
\be \label{f:property2}
\|\nabla^2 f_i(y) - \nabla^2 f_i(z)\|_{\infty} \leq L \|y-z\|_{\infty}.
\ee
\end{assumption}

We are also interested in  \eqref{eq:penalty-intro}.  
It can be expressed  as follows. Let
$$ F(\xbf) = \sum_{i=1}^N f_i(\xbf_i), $$
where $ \xbf=(\xbf_1,\ldots,\xbf_N) \in \RnN, \; \xbf_i \in \Rn. $ 
Then (\ref{eq:minsum_newton}) is equivalent to the following constrained problem 
$$ \min_{\xbf \in \RnN}  F(\xbf) \mbox{ s.t. } \xbf_i = \xbf_j, \; i,j \in \{1,\ldots, N\}. $$
Notice that Assumption \ref{ass:objetive} implies that $ F $ is strongly convex and its Hessian is Lipschitz continuous with the same constants $ \mu, M $ and $ L. $ 

Assuming that $W$ is a consensus matrix associated with the network $\calG$, we define $\calW = W\otimes I_n$ with $ I_n $ being the identity matrix in $ \Rn $ and $ \otimes $ the Kronecker product. Since $W$ is a doubly stochastic matrix, we have that $\calW\xbf = \xbf$ if and only if $\xbf_i = \xbf_j$ for every $i,j=1,\dots, N, $ \cite{NetworkNewton} and therefore, problem \eqref{eq:minsum_newton} is equivalent to  the following problem
\begin{equation}\label{eq:minsum_constrW}
    \min_{\xbf\in\RnN} F(\xbf),\ \text{s.t.}\ (I-\calW)^{1/2}\xbf=0. 
\end{equation}
Clearly, one can state the constraint in \eqref{eq:minsum_constrW} in different ways, for example $ (I-\calW)\xbf=0 $, but the formulation in \eqref{eq:minsum_constrW} is the most common one as it allows us to consider the quadratic penalty as stated in \eqref{eq:penaltyref} below. Given $\beta>0$, we consider the following problem, equivalent to \eqref{eq:penalty-intro}:
\begin{equation}\label{eq:penaltyref}
    \min_{\xbf\in\RnN}\Phi_\beta(\xbf)\ \text{with}\ \Phi_\beta(\xbf)= F(\xbf)+\frac{1}{2\beta}\xbf\tr(I-\calW)\xbf.
\end{equation}
The relationship between \eqref{eq:penaltyref} and \eqref{eq:minsum_newton} is analysed in \cite{penaltyref}. In particular, it is known that the penalty problem \eqref{eq:penaltyref} yields an approximate solution of \eqref{eq:minsum_newton} such that $ \|\xbf^*_i - \ybf^*\| = {\cal O}(\beta/(1-\lambda_2)) $ where $ \xbf^*=(\xbf_1^*,\ldots,\xbf_N^*) $ is the solution of (\ref{eq:penaltyref}), $ \ybf^* $ is the solution of \eqref{eq:minsum_newton} and $ \lambda_2 $ is the second largest eigenvalue of $ W. $  First  we will be concerned with the solution of (\ref{eq:penaltyref}). Later in Section 6 we deal with a sequence of penalty problems, with decreasing values of the penalty parameter $ \beta $ that allow us to reach the solution of (\ref{eq:minsum_newton}) with an arbitrary precision. This approach in some sense mimics the penalty decomposition methods in the classical computational framework, where one deals with a sequence of penalty problems, defined in different ways, to solve the original problems. The decomposition is usually defined splitting the set of constraints into sets of easy and difficult constraints, and then a sequence of problems is solved, possibly inexactly, using different optimality conditions, for example see \cite{KL2}.

\section{Algorithm DINAS: Personalized distributed optimization}

The classical Inexact Newton iteration for (\ref{eq:penaltyref}), given the current iteration $ \xbf^k $ and the forcing parameter $ \eta_k, $ is defined as 
$$ \xbf^{k+1} = \xbf^k + \alpha_k \dbf^k, $$
where $ \alpha_k > 0 $ is the step size, $\dbf^k $ is computed from 
\be \label{eq:Nsystem} 
 \nabla^2 \Phi_{\beta}(\xbf^k) \dbf^k = \nabla \Phi_{\beta}(\xbf^k) + \rbf^k
 \ee 
 and $ \rbf^k $ is the residual vector that satisfies 
\be \label{eq:residual} 
\|\rbf^k\|_{\infty} \leq \eta_k  \| \nabla \Phi_{\beta}(\xbf^k)\|_{\infty}. 
\ee
 The forcing term $ \eta_k $ is bounded from above, $ \eta_k \leq \eta < 1 $ while the step size $ \alpha_k $ is determined by line search or some other globalization strategy, \cite{EW}. If $ \eta_k = 0 $ the method is in fact the Newton's method and the step size can be determined as in \cite{Polyak}.  Notice that we are using the norm $ \|\cdot\|_{\infty} $ in the above expressions as that norm is suitable for the distributed case we are interested in.

In the context of the setting presented in \eqref{eq:Nsystem}--\eqref{eq:residual}, we can distinguish our theoretical and numerical contributions. Our theoretical contributions are grounded on 1) novel analysis of inexact Newton methods when the residual error satisfies condition~\eqref{eq:residual}; and 2) showing that a wide class of iterative distributed solvers can be designed such that it satisfies condition~\eqref{eq:residual}. Our numerical contributions are to develop, test and validate  efficient iterative solvers that solve \eqref{eq:Nsystem} up to accuracy~\eqref{eq:residual}.

 To apply an Inexact Newton method in distributed framework we need to compute the direction $ \dbf^k $ such that (\ref{eq:residual}) holds and to determine the step size. 
 Note that directly solving \eqref{eq:Nsystem} would require the inversion of matrix $\nabla^2 \Phi_{\beta}(\xbf^k)$. This is however not amenable in distributed setting. Namely, while the matrix $\nabla^2 \Phi_{\beta}(\xbf^k)$ has a block-sparsity structure that respects the sparsity pattern of the network, its inverse may not have this property, and hence a direct solution of \eqref{eq:Nsystem} by the matrix inversion is not possible. However, as the matrix $\nabla^2 \Phi_{\beta}(\xbf^k)$ is block-sparse according to the network structure, we can apply carefully designed iterative fixed point solvers in order to solve \eqref{eq:Nsystem} up to the accuracy defined by (\ref{eq:residual}). 
   For this purpose, a number of methods for solving linear systems in distributed framework may be used, \cite{dls0,nedic, DFIX}. It turns out that a class of fixed point methods may be used  without any   changes, \cite{EFIX,JKK}.  
 The application of Jacobi Overrelaxation (JOR) method to the system (\ref{eq:Nsystem}) is specified here but the theoretical analysis holds for any solver assuming that (\ref{eq:residual}) is valid.

 For the sake of clarity let us briefly state the JOR method for solving (\ref{eq:Nsystem}), while further details can be seen in \cite{EFIX, JKK}. First of all notice that the system matrix $ \nabla^2 \Phi_{\beta}(\xbf^k)= \nabla^2 F(\xbf^k) + \frac{1}{\beta }(I - \calW)$ is symmetric and positive definite due to strong convexity of $ F, $ positive $ \beta $ and the fact that $ (I-\calW) $ is positive semidefinite matrix. Given the current approximation $ \dbf^{k,\ell} \in \RnN, $ where $ \ell $ denotes the inner iteration counter $ 
\ell=0,1\ldots, $ for solving the linear system,  we can describe the next iteration of JOR method for solving (\ref{eq:Nsystem}) as follows. Denoting the Hessian $ H^k = [H_{ij}^k]= \nabla^2 \Phi_{\beta}(\xbf^k) \in \mathbf{R}^{nN \times nN}, H_{ii}^k = \nabla^2 f_i(\xbf^k_i) +\frac{1}{\beta }(1-w_{ii}) I_n,  H_{ij}^k = -\frac{1}{\beta } w_{ij}I_n, i\neq j, $ with the diagonal part of each $ H_{ii}^k $ denoted by $ D_{ii}^k, $ the gradient $ \gbf_k = (\gbf^k_1,\ldots, \gbf^k_N), \; \gbf^k_i =  \nabla f_i(\xbf^k_i) +\frac{1}{\beta}((1-w_{ii})\xbf^k_i - \sum_{j \in O_I} w_{ij}\xbf^k_j), i=1,\ldots,N $ and with the relaxation parameter $ \omega \in  \R$, that will be defined later on, the next iteration of JOR method for solving (\ref{eq:Nsystem}) is given by 
  \begin{equation}\label{eq:JORiter}
 \dbf^{k,\ell+1}_i = \dbf^{k,\ell}_i + \omega (D^k_{ii})^{-1}\lr{\gbf^k_i-\sum_{j=1}^N H^k_{ij}\dbf^{k,\ell}_j }.
 \end{equation}

 Performing enough iterations of (\ref{eq:JORiter}) we can get $ \dbf^k = \dbf^{k,\ell} $ such that  (\ref{eq:residual}) holds. It is easy to see that (\ref{eq:JORiter}) is distributed as each node $ i $ holds the blocks $ H_{ii}^k, H_{ij}^k, j \in O_i, $ i.e., each node holds the corresponding row $ H_i^k$ of the Hessian $ H^k, $ the local gradient component $ \gbf_i^k $ and the nodes need to share their approximate solutions $ \dbf_i^{k,\ell} $ only among neighbours. The method is converging for a suitable value of $ 0 < \omega < 2 \beta (1-\bar{w})/(M + 2 \beta), $ with $ \bar{w} = \max_{1\leq i\leq N} w_{ii}, $ see \cite{EFIX} for details. 
 From now on we will assume that $ \omega $ is chosen such that JOR method converges, with further implementation details  postponed to Section 7. 
 
 Notice that, in order for parameter $\omega$ to be properly set, 
 nodes need to know beforehand the global constants $\bar{w}$ and $M$.
  These two constants that corresponds to maxima of certain local nodes' quantities across the network can be precomputed by running beforehand two independent 
  algorithms for the maximum computation such as \cite{Johansson}.
  In addition, DINAS can alleviate the requirement for such knowledge if 
  JOR is replaced by the following iterative method:
    \begin{equation}\label{eqn-JOR-modified}
 \dbf^{k,\ell+1}_i = \left[ \nabla^2 f_i(\xbf^k_i) + (1/\beta) I\right]^{-1}
 \left( \sum_{j \in O_i} w_{ij} (1/\beta) \dbf^{k,\ell}_j +  \gbf^k_i\right),
 \end{equation}
 for $i=1,...,N$ in parallel. 
 The iterative method \eqref{eqn-JOR-modified} does not require 
 any beforehand knowledge of global parameters, at the cost of the 
 requirement for local Hessian inverse calculations.
  It is easy to see that \eqref{eqn-JOR-modified} 
  converges linearly, and there holds:
    \begin{equation*}\label{eqn-JOR-modified-rate}
 \| \dbf^{k,\ell+1} -  \dbf^{k,\star} \| 
 \leq \frac{1/\beta}{1/\beta + \mu} \,  \| \dbf^{k,\ell} -  \dbf^{k,\star} \| ,
 \end{equation*}  
 where 
 $ \dbf^{k,\star} =  
  \left[\nabla^2 \Phi_{\beta}(\xbf^k)\right]^{-1} \nabla \Phi_{\beta}(\xbf^k) $ 
  is the exact solution of \eqref{eq:Nsystem}.
  
 Having a distributed algorithm for solving the system of linear equations, let us explain the adaptive step size strategy employed here. The basic assumption is that the global constants $ \mu $ and $ L $ are not available. 
Classical line search method, without the global constants, is not applicable in the distributed settings as it requires the value of the global function at each node, which would be very costly in terms of communications. Furthermore, several values of global function value might be needed in each iteration to compute the line search step size. Thus we employ the procedure that  is governed by a sequence of parameters $ \{\gamma_k\} $ that are used to generate the step sizes,  adopting the reasoning for the Newton method from \cite{Polyak} to the case of distributed and inexact method. In each iteration we try the step size based on the current value of $ \gamma_k $ and check if such step size generates enough decrease in the gradient. If the decrease is large enough  the step size is accepted and we proceed to a new iteration. If not, the step size is rejected, the value of $ \gamma_k $ is reduced and a new step size is tried with the same direction. Thus the checking procedure is cheap as the direction stays the same and we will show that it eventually ends up with a step size that generates enough decrease. 
 
 The step size computation includes the infinity norm of the gradient in the previous iteration and  all nodes need this value. Therefore we use the algorithm from \cite{flooding} for exchange of information around the network. Recall that in \cite{flooding} the nodes exchange either local Hessians or their rank one approximations, while in DINAS they exchange only scalars. The algorithm is included here for the sake of completeness.  The distributed maximum computation can be performed exactly in a finite number of communication rounds, at most equal to the network diameter  \cite{Johansson}. However we used the algorithm from \cite{flooding} as this does not make an important difference for DINAS and makes numerical comparison presented in Section 7 easier to follow.

 \begin{alg}[DSF]\label{alg:DSF} $\ $\\
\textbf{Input:} Each node $ i: \{S_i\}_{i\in\N}$ scalar messages 
\begin{algorithmic}[1]
\STATE Set $I^0_i = \{S_i\}$
\FOR{$k=0,\dots,N-1$}
    \FOR{$j\in O_i$}
        \STATE take $S\in I^{k-1}_0$ such that $S$ was not received from node $j$ and not sent to node $j$ at the previous iterations
        \STATE send $S$ to node $j$
    \ENDFOR 
    \STATE update $I^{k}_i$ adding the messages received by the neighbors
\ENDFOR
\end{algorithmic}
\end{alg}
 Once the step is accepted the nodes run  DSF algorithm exchanging the infinity norms of local gradients to get the infinity norm of the aggregate gradient and proceed to the new iteration. The algorithm is stated in a node-wise fashion, to facilitate the understanding of the distributed flow although a condensed formulation will be used later on, for theoretical considerations.

\begin{alg}[DINAS]\label{alg:DINAS}$\ $\\
\textbf{Iteration $k$,  each node $i $ holds: $\eta_k,\  \xbf^k_i,\ \gbf^k_i  ,\ H_{i}^k,\ \gamma_k>0,\ \|\gbf^k\|_{\infty},\ q\in(0,1) $ }
\begin{algorithmic}[1]
\STATE All  nodes run distributed JOR iterations (\ref{eq:JORiter}) to compute $\{\dbf^k_i\}_{i=1}^N$ such that \begin{equation}\label{localcond}
  \|H^k_i  \dbf^k_i - \gbf^k_i\|_\infty\leq\eta_k\|\gbf^k\|_\infty,  \enspace \forall i=1,\dots,N
\end{equation}
\STATE All nodes compute the step size 
\begin{equation*}\label{adaptive_step}\alpha_k = \min \left\{1, \frac{1-\eta_k}{(1+\eta_k)^2}\gamma_k\frac{1}{\|\gbf^k\|_{\infty}}\right\}
\end{equation*}
\STATE Each node $i$ computes $\hat\xbf_i = \xbf^k_i-\alpha_k\dbf_i^k$
\STATE All nodes  share $\hat\xbf_i$ with the neighbors in $ O_i $
\STATE Each node $i$ computes $\hat \gbf_i = \nabla f_i(\hat\xbf_i) + \frac{1}{\beta}\lr{\hat\xbf_i -\sum_{j} w_{ij}\hat\xbf_j}$
\STATE All nodes run DSF (Algorithm \ref{alg:DSF}) with message at node $i$ given by $S_i = \|\hat \gbf_i\|_{\infty}$
\STATE Each node $i$ computes $\|\hat\gbf\|_{\infty} = \max_{j=1:N} S_j$
\STATE Each node $i$ performs the following check and conditional updates
 \IF {\begin{equation}\label{cond1}
 \alpha_k < 1 \mbox{ and }
\|\hat\gbf\|_\infty\leq \|\gbf^k\|_\infty - \frac{1}{2}\frac{(1-\eta_k)^2}{(1+\eta_k)^2}\gamma_k
 \end{equation} or  
 \begin{equation}\label{cond2}
 \alpha_k = 1 \mbox{ and }
\|\hat\gbf\|_\infty\leq\eta_k\|\gbf^k\|_\infty+\frac{1}{2\gamma_k}(1+\eta_k)^2\|\gbf^k\|_\infty^2
 \end{equation}}
  \STATE set $\gamma_{k+1} = \gamma_k$
  \STATE set $\xbf^{k+1}_i = \hat\xbf_i$,\ $\gbf^{k+1}_i = \hat \gbf_i,\ \|\gbf^{k+1}\|_{\infty} = \|\hat \gbf\|_{\infty} $
 \STATE define $H^{k+1}_i = \left(H^{k+1}_{i1},\dots,H^{k+1}_{iN}\right)\in\R^{n\times nN}$ with
  $$H^{k+1}_{ii} =
    \nabla^2f_i(\xbf^{k+1}_i)+ \frac{1}{\beta}(1-w_{ii})I_n, \;  H^{k+1}_{ij} = 
    -\frac{1}{\beta}w_{ij}I_n, \;  j\neq i
  $$
  \ELSE 
  \STATE set $\gamma_{k} = q\gamma_k$ and return to line 2
 \ENDIF
\end{algorithmic}
\end{alg}

Let us briefly comment the computational and computational costs of DINAS here, with further details postponed to Section 7. Application of JOR method for solving (\ref{eq:Nsystem}) implies that in each inner (JOR) iteration all nodes share their current approximations $ \dbf^{k,\ell}_i $ with neighbours.  Each iteration of JOR method includes only the inversion of local diagonal matrix and is hence rather cheap. Depending on the value of $ \eta_k $ the number of inner iterations can vary but the right hand side of (\ref{eq:residual}) ensures that initially we solve (\ref{eq:Nsystem}) rather loosely, with relative large residual and small computational effort while the accuracy requirements increase as the gradients gets smaller and we approach the solution. Therefore we completely avoid inversion of (possibly very dense) local Hessians
that is needed in NN method  \cite{NetworkNewton} and in one version of DAN, \cite{flooding}. Thus DINAS should be particularly effective if the dimension of (\ref{eq:minsum_newton}) is relatively large as the cost of JOR approximate solution  should be significantly smaller than the cost of exact solution (local Hessian inversion) in each iteration of Network Newton method. 

The condition (\ref{localcond}) does not need to be verified online if a  pre-defined number of iterations is used and we will estimate that number later on. Otherwise, if online verification is used, the nodes would need to run another distributed maximum procedure to see that each node satisfies the condition but such addition appears unnecessary.  Communication costs of exchanges needed to compute $ \|g(\xbf^k)\|_{\infty}  $ and $ \|\hat\gbf\|_\infty $ are not large as the nodes exchange only scalar values - the infinity norm of local gradient components. 

The step size computation at line 2 is performed by each node but they all compute the same value  so that step is well defined. The check and update defined at line 9 is again performed by all nodes using the same values and hence the result will be the same at each node. Therefore, all nodes go back to line 2 or all nodes update the approximate solution $ \xbf^k_i $ and the if loop at line 9 is well defined. In the next section we will prove that the if loop has finite termination.

Regarding the global knowledge requirements of system constants, to implement Step 6 (DSF or \cite{Johansson}), all nodes need to know beforehand an upper bound on the number of nodes $N$, or on network diameter. 
 Next, one can utilize \eqref{eqn-JOR-modified} with a single inner iteration and 
 $\eta=1/(1+\beta \mu)$, and global convergence of DINAS is ensured; see Theorem 
 4.1 ahead. Hence, for global convergence, nodes only need to know 
 beforehand an upper bound on $N$ and a lower bound on $\mu$. 
 Notice that $\mu$ can be calculated beforehand by running 
 a distributed minimum algorithm computation \cite{Johansson}
  with the initialization at node $i$ 
  given by its local cost's strong convexity constant.
   When compared with alternative second order solvers 
   such as NN \cite{NetworkNewton} and \cite{ErminWei}, 
   this is a significantly reduced prior knowledge requirement as these methods require the Lipschitz constans of Hessian  and 
    the gradient.  
   When DINAS employs additional tuning and knowledge of global constants, 
   as it is assumed by Theorems 4.2--4.4, then 
   stronger results in terms of convergence rates are ensured.

DINAS requires the scalar maximum-compute in Step 6 and synchronized activities of all nodes in Steps 6--14. It is interesting to compare DINAS with \cite{flooding} that requires a similar procedure. While \cite{flooding} utilizes such procedure over local Hessians ($O(n^2)$-sized local messages), DINAS requires Step 6 only for a scalar quantity. 
 It is also worth noting that, in view of Steps 6--14, DINAS is not easily generalizable to asynchronous settings as done in \cite{ErminWei}, or to unreliable 
 link/nodes settings although direction might be computed by asynchronous JOR method \cite{AF}.  However, for reliable and synchronized networks, 
 we emphasize that DINAS incurs drastic improvements both theoretically (quadratic convergence)
  and in experiments, exhibiting orders of magnitude faster rates when compared with existing alternatives.

\section{Convergence analysis of DINAS for personalized distributed optimization} 
Let us start the analysis proving that DINAS is well defined , i.e. proving that the  loop defined at line  9 has finite termination. 
\begin{lemma} \label{le:finite}
Let Assumptions \ref{ass:network} - \ref{ass:objetive} hold. Then the following statements hold:
\begin{enumerate}[i)]
    \item if $\gamma_k\leq \mu^2/L$ then one of  the condition at line 9 is satisfied; 
    \item the number of times $\gamma_k$ is reduced is bounded from above by
    $\log_{1/q}\lr{\gamma_0 L/\mu^2};$
    \item for every $k\in\N_0$ we have $\bar\gamma<\gamma_k\leq\gamma_0$, with $\bar \gamma = q\frac{\mu^2}{L}$;
    \item there exists $\bar m_1\in\N_0$ such that $\gamma_k = \gamma_{\bar m_1}$ for every $k\geq\bar m_1. $
\end{enumerate}
\end{lemma}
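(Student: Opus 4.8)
The plan is to establish the four claims essentially in the order stated, since each one feeds the next. The core analytic work is item (i): I would suppose $\gamma_k \le \mu^2/L$ and show that the trial step at line 2 (with that value of $\gamma_k$) satisfies either \eqref{cond1} or \eqref{cond2}. The key is a standard inexact-Newton descent estimate. Writing $\hat\xbf = \xbf^k - \alpha_k\dbf^k$ with $\dbf^k$ satisfying \eqref{eq:residual} (which follows from the nodewise condition \eqref{localcond} since the infinity norm of the stacked residual is the maximum of the local ones), I would Taylor-expand $\nabla\Phi_\beta$ around $\xbf^k$, use Lipschitz continuity of the Hessian \eqref{f:property2} (with constant $L$, inherited by $F$ and hence by $\Phi_\beta$ since the penalty term is quadratic) to bound the remainder by $\tfrac{L}{2}\alpha_k^2\|\dbf^k\|_\infty^2$, and use strong convexity (constant $\mu$) together with \eqref{eq:residual} to bound $\|\dbf^k\|_\infty \le \tfrac{1+\eta_k}{\mu}\|\gbf^k\|_\infty$. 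This yields
\[
\|\hat\gbf\|_\infty \le \bigl(1-\alpha_k(1-\eta_k)\bigr)\|\gbf^k\|_\infty + \tfrac{L}{2}\alpha_k^2\Bigl(\tfrac{1+\eta_k}{\mu}\Bigr)^2\|\gbf^k\|_\infty^2 .
\]
Then I split into the two cases defining $\alpha_k$ at line 2. If $\alpha_k = \tfrac{1-\eta_k}{(1+\eta_k)^2}\gamma_k/\|\gbf^k\|_\infty < 1$, substituting this $\alpha_k$ and using $\gamma_k \le \mu^2/L$ to control the quadratic term should collapse the bound to exactly the right-hand side of \eqref{cond1}. If $\alpha_k = 1$, substituting $\alpha_k = 1$ gives the right-hand side of \eqref{cond2} directly (this case needs no assumption on $\gamma_k$ beyond positivity). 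The arithmetic matching the constants $\tfrac12$, $(1\pm\eta_k)^2$ is the fiddly part, but it is routine once the descent inequality above is in hand.

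Given (i), item (ii) is immediate: $\gamma$ is only reduced (line 14, by the factor $q\in(0,1)$) when both conditions fail, which by (i) cannot happen once $\gamma_k \le \mu^2/L$. Starting from $\gamma_0$, after $m$ reductions $\gamma$ equals $q^m\gamma_0$; the first time $q^m\gamma_0 \le \mu^2/L$ no further reduction occurs, and solving $q^m\gamma_0 \le \mu^2/L$ for $m$ gives $m \le \log_{1/q}(\gamma_0 L/\mu^2)$, which bounds the total number of reductions. For item (iii), the upper bound $\gamma_k \le \gamma_0$ holds because $\gamma$ is never increased. For the lower bound: reductions happen only while $\gamma_k > \mu^2/L$ (contrapositive of (i)), so just before any reduction $\gamma_k > \mu^2/L$, hence just after it $\gamma_k > q\mu^2/L = \bar\gamma$; since between reductions $\gamma_k$ is constant, $\gamma_k > \bar\gamma$ for all $k$. (One should note $\gamma$ may be reduced several times within a single outer iteration via the "return to line 2" loop, but this does not affect the argument — each individual reduction still starts from a value exceeding $\mu^2/L$.) Finally item (iv) follows from (ii): the total number of reductions over all iterations is finite, bounded by $\bar m_2 := \lceil \log_{1/q}(\gamma_0 L/\mu^2)\rceil$; let $\bar m_1$ be the (outer iteration) index at which the last reduction occurs — equivalently, since each outer iteration performs at least zero reductions, take $\bar m_1 \le \bar m_2$ appropriately — after which $\gamma_k$ is constant, proving $\gamma_k = \gamma_{\bar m_1}$ for all $k \ge \bar m_1$.

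The main obstacle is item (i), and within it the bookkeeping needed to get the inexact-Newton residual and the Hessian-Lipschitz remainder to combine into precisely the stated thresholds; in particular one must be careful that the Lipschitz bound \eqref{f:property2} is in the infinity norm (matrix $\|\cdot\|_\infty$ versus the induced operator norm acting on vectors), so the step of passing from $\|\nabla^2\Phi_\beta(\xbf^k+t\alpha_k\dbf^k) - \nabla^2\Phi_\beta(\xbf^k)\|$ applied to $\dbf^k$ to a scalar bound of the form $L t\alpha_k\|\dbf^k\|_\infty^2$ should be justified cleanly (block-diagonal-plus-constant structure of the Hessian makes this work, since the penalty part of the Hessian is constant in $\xbf$ and thus contributes nothing to the difference). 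Everything after (i) is elementary geometric-series / monotonicity reasoning.
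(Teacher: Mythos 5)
Your proposal follows essentially the same route as the paper's proof: the same Taylor expansion of the gradient along the step, the same use of the residual bound \eqref{eq:residual} and of $\|\dbf^k\|_\infty\le\frac{1+\eta_k}{\mu}\|\gbf^k\|_\infty$ to arrive at the descent inequality, the same case split on $\alpha_k$, and the same monotonicity arguments for items (ii)--(iv). One small correction: in the $\alpha_k=1$ case the substitution yields the bound $\eta_k\|\gbf^k\|_\infty+\frac{L}{2\mu^2}(1+\eta_k)^2\|\gbf^k\|_\infty^2$, which implies \eqref{cond2} only because $\frac{L}{\mu^2}\le\frac{1}{\gamma_k}$, so the hypothesis $\gamma_k\le\mu^2/L$ is needed in this case as well --- it is not a ``positivity of $\gamma_k$ only'' case as you claim.
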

\begin{proof}
Since $f$ is two times continuously differentiable we have that for any $\alpha\in\R$ 
\begin{equation*}
\gbf(\xbf^k-\alpha \dbf^k) = \gbf(\xbf^k) - \alpha H^k\dbf^k - \int_0^1 \alpha \left(H(\xbf^k-s\alpha \dbf^k)- H^k\right)\dbf^k ds
\end{equation*}
and therefore, by  Assumption \ref{ass:objetive}, \eqref{eq:Nsystem} and \eqref{eq:residual}
\begin{equation*} 
\begin{aligned} \|\gbf(\xbf^k -\alpha \dbf^k)\|_\infty  & \leq \|\gbf^k - \alpha(\gbf^k+\rbf^k)\|_\infty  + \alpha \int_0^1 \|H(\xbf^k-s\alpha \dbf^k)- H^k\|_\infty\|\dbf^k\|_\infty ds \\  & \leq |1-\alpha|\|\gbf^k\|_\infty+ \alpha\|\rbf^k\|_\infty+ \alpha^2L\int_0^1 s\|\dbf^k\|_\infty ^2 ds \\
&\leq |1-\alpha|\|\gbf^k\|_\infty+ \alpha\eta_k\|\gbf^k\|_\infty+ \frac{\alpha^2L}{2}\|(H^k)^{-1}(\gbf^k+\rbf^k)\|_\infty ^2, 
\end{aligned}
\end{equation*}
and we get 
\begin{equation}\label{g_next}
 \|\gbf(\xbf^k -\alpha \dbf^k)\|_\infty \leq (|1-\alpha|+\eta_k\alpha)\|\gbf^k\|_\infty+ \frac{1}{2}\frac{L}{\mu^2} (1+\eta_k)^2 \alpha^2 \|\gbf^k\|_\infty ^2.
\end{equation}  
To prove the first statement we have to show that if $\gamma_k\leq\mu^2/L$ then either \eqref{cond1} or \eqref{cond2} hold.
From \eqref{g_next}, if $\alpha_k=1$ we have
$$\|\hat\gbf\|_\infty \leq\eta_k\|\gbf^k\|_\infty +\frac{1}{2}\frac{L}{\mu^2}(1+\eta_k)^2\|\gbf^k\|_\infty ^2.$$
Otherwise, for $\alpha_k<1$, we get
\begin{equation*}
    \begin{aligned}
    \|\hat\gbf\|_\infty &\leq \lr{1-\frac{\gamma_k(1-\eta_k)^2}{\|\gbf^k\|_\infty(1+\eta_k)^2}}\|\gbf^k\|_\infty 
    +\frac{L}{2 \mu^2}(1+\eta_k)^2\lr{\frac{\gamma_k(1-\eta_k)}{(1+\eta_k)^2 \|\gbf^k\|_\infty}}^2\|\gbf^k\|_\infty ^2
   \\ &
    \leq\|\gbf^k\|_\infty +\frac{\gamma_k (1-\eta_k)^2}{(1+\eta_k)^2}\left(\frac{L}{2 \mu^2}\gamma_k-1\right).
    \end{aligned}
\end{equation*} 
Given that $\gamma_k\leq  \mu^2/L$ the desired inequalities follow in both cases and we get $i).$\\
By definition of $\gamma_{k+1}$ (lines 10 and 13 in Algorithm DINAS), the sequence $\{\gamma_k\}$
is non increasing, the value of $\gamma_k$ is reduced only when neither \eqref{cond1} nor \eqref{cond2} are satisfied and in that case we decrease
$ \gamma_k $ by a fixed $ q \in (0,1). $ This, together with $i)$ implies $ii)$ and $iii).$ Since we proved that $\{\gamma_k\}$ is bounded from below, $iv)$ follows. 
\end{proof}
Lemma \ref{le:finite} implies in particular that for $k$ large enough $\gamma_k$ becomes constant. While by $iii)$ we know that $\gamma_{\bar m_1}\geq q \mu^2/L$, the Lemma does not state that $\gamma_k$ will eventually reach $q \mu^2/L$.

Notice that the iteration of DINAS can be written in a compact form as follows. Given $ \xbf^k $ and $ \dbf^k $ such that (\ref{eq:residual}) holds, we have $ \xbf^{k+1} = \xbf^k - \alpha_k \dbf^k $ where
\be \label{eq:alpha}
\alpha_k  = \min \left\{1, \frac{1-\eta_k}{(1+\eta_k)^2}\gamma_k\frac{1}{\|\gbf^k\|_{\infty}}\right\}.
\ee 
In the next theorem we prove convergence to the unique solution of \eqref{eq:penaltyref}. 

\begin{theorem} \label{th:main}
Assume that \ref{ass:network} - \ref{ass:objetive} hold, $\{\eta_k\} $ is a nonincreasing sequence such that $ 0\leq\eta_k\leq \bar\eta <1 $ and $ \gamma_0 > 0. $  Let $ \{\xbf^k\} $ be an arbitrary  sequence generated by DINAS.  Then 
\begin{enumerate}[i)]
\item there exists $\bar m_2\in\N_0$ such that $\alpha_k=1$ for every $k\geq \bar m_1+\bar m_2$, and  \begin{equation}\label{m2}\bar m_2 \leq\left\lceil \frac{1}{C}\lr{\|\gbf^{\bar m_1}\|_\infty  - \bar\gamma\frac{1-\bar\eta}{(1+\bar\eta)^2}}+1\right\rceil, \; C = q\frac{\mu^2}{L}\frac{(1-\bar\eta)^2}{(1+\bar\eta)^2};\end{equation}
\item $\lim_{k \to \infty} \|\gbf^k\|_\infty  = 0; $
\item $\lim_{k \to \infty} \xbf^k = \xbf^*$, where $ \xbf^* $ is the unique solution  of (\ref{eq:penaltyref}). 
\end{enumerate}
\begin{proof}
Let us first assume that at iteration $k$ we have step size 
\begin{equation}\label{alpha_small}
    \alpha_k= \frac{1-\eta_k}{(1+\eta_k)^2}\gamma_k\frac{1}{\|\gbf^k\|_\infty }<1.
\end{equation}
Then  \eqref{cond1} implies 
\begin{equation}\label{g_nex1}
    \|\gbf^{k+1}\|_\infty \leq \|\gbf^k\|_\infty -\frac{1}{2}\gamma_k\frac{(1-\eta_k)^2}{(1+\eta_k)^2}.
\end{equation}
By Lemma \ref{le:finite} we have $\gamma_k\geq q \mu^2/L$. Moreover, since $\frac{(1-\eta)^2}{(1+\eta)^2}$ is a decreasing function of $\eta$ for $\eta\in(0,1)$ and $\eta_k\leq\bar\eta<1$ we have that, for every $k$
$$\frac{(1-\eta_{k})^2}{(1+\eta_{k})^2}\geq \frac{(1-\bar\eta)^2}{(1+\bar\eta)^2}>0, \mbox{ and } \gamma_k\frac{(1-\eta_k)^2}{(1+\eta_k)^2}\geq q\frac{\mu^2}{L}\frac{(1-\bar\eta)^2}{(1+\bar\eta)^2}=:C>0$$
Replacing the last inequality in \eqref{g_nex1} we get 
\begin{equation}\label{g_nex1_2}
    \|\gbf^{k+1}\|_\infty \leq \|\gbf^k\|_\infty  - C/2
\end{equation}
for every iteration index $k$ such that $\alpha_k<1.$

Let us now consider the case where $\alpha_k=1. $ The by definition of $\alpha_k$ there follows
\begin{equation}\label{gstep}
    \|\gbf^k\|_\infty \leq\gamma_k\frac{1-\eta_k}{(1+\eta_k)^2}.
\end{equation}
From this inequality and \eqref{cond2}  we have
\begin{equation}\label{g_nex2}
\begin{aligned}
    \|\gbf^{k+1}\|_\infty &\leq\eta_k\|\gbf^k\|_\infty +\frac{1}{2\gamma_k}(1+\eta_k)^2\|\gbf^k\|_\infty ^2 \\& \leq \eta_k\|\gbf^k\|_\infty +\frac{1}{2}\left(1-\eta_k\right)\|\gbf^k\|_\infty =\frac12(1+\eta_k)\|\gbf^k\|_\infty .
\end{aligned}
\end{equation}
Since $\eta_k\leq\bar\eta<1$, for every $k$ such that $\alpha_k=1$ we have, with $\rho=\frac12(1+\bar\eta),$  
\begin{equation}\label{phase2}\|\gbf^{k+1}\|_\infty \leq\rho\|\gbf^k\|_\infty.  \end{equation}

Let $k>\bar m_1$. If $\alpha_k=1$, by \eqref{gstep}, \eqref{g_nex2}, $\eta_{k+1}\leq\eta_k$, and $\gamma_k = \gamma_{\bar m_1} = \gamma_{k+1}$ we have 
$$\|\gbf^{k+1}\|_\infty \leq\rho\|\gbf^k\|_\infty \leq\rho\gamma_k\frac{(1-\eta_k)}{(1-\eta_k)^2}\leq \rho\gamma_{k+1}\frac{(1-\eta_{k+1})}{(1-\eta_{k+1})^2},$$
which implies $\alpha_{k+1} = 1.$ Denote with $\bar m_2$ the smallest positive integer such that $\alpha_{\bar m_1+\bar m_2} = 1$.
This, together with \eqref{alpha_small} which holds for $ k = \bar m_1+\bar m_2-1$  and \eqref{g_nex1} implies 
\begin{equation*}\begin{aligned}
    \gamma_{\bar m_1+\bar m_2-1}\frac{1-\eta_{\bar m_1+\bar m_2-1}}{(1+\eta_{\bar m_1+\bar m_2-1})^2}&<\|\gbf^{\bar m_1+\bar m_2-1}\|_\infty \leq \|\gbf^{\bar m_1+\bar m_2-2}\|_\infty -C \\ & \leq
    \|\gbf^{\bar m_1}\|_\infty -(\bar m_2-1)C,
\end{aligned}
\end{equation*}
and thus 
\begin{equation*}
\begin{aligned}
\bar m_2 & < \frac{1}{C}\lr{\|\gbf^{\bar m_1}\|_\infty  - \gamma_{\bar m_1+\bar m_2-1}\frac{1-\eta_{\bar m_1+\bar m_2-1}}{(1+\eta_{\bar m_1+\bar m_2-1})^2}}+1 \\ & \leq
\frac{1}{C}\lr{\|\gbf^{\bar m_1}\|_\infty  - \bar\gamma\frac{1-\bar\eta}{(1+\bar\eta)^2}}+1.
\end{aligned}
\end{equation*}
Since we already proved that $\alpha_{k} = 1$ implies $\alpha_{k+1}=1$ for every $k\geq \bar m_2$,  $i)$ holds. 
Inequalities \eqref{g_nex1_2} and \eqref{phase2}, together with $i)$, imply part $ii)$ of the statement.\\
It remains to prove that the sequence of iterates $\{\xbf^k\}$ converges to the unique solution of (\ref{eq:penaltyref}).  For every $j\in\N$ we have, by \eqref{eq:Nsystem}, \eqref{eq:residual}, \eqref{phase2} and the bound on $\eta_k$
\begin{equation*}
    \begin{aligned}
    &\|\dbf^{\bar m_1+\bar m_2+j}\|_\infty \leq \|\lr{H^{\bar m_1+\bar m_2+j}}^{-1}(\gbf^{\bar m_1+\bar m_2+j}+\rbf^{\bar m_1+\bar m_2+j})\|_\infty  \\
    &\leq \frac{1}{\mu}(1+\bar\eta)\|\gbf^{\bar m_1+\bar m_2+j}\|_\infty \leq \frac{1}{\mu}(1+\bar\eta)\rho\|\gbf^{\bar m_1+\bar m_2+j-1}\|_\infty 
     \leq \frac{1}{\mu}(1+\bar\eta)\rho^j\|\gbf^{\bar m_1+\bar m_2}\|_\infty .
    \end{aligned}
\end{equation*}
Thus, for every $s\geq l\geq 0$
\begin{equation*}
    \begin{aligned}
   &\|\xbf^{\bar m_1+\bar m_2+s}-\xbf^{\bar m_1+\bar m_2+l}\|_\infty \leq\sum_{j=l}^{s-1}\|\dbf^{\bar m_1+\bar m_2+j}\|_\infty   \\ &\leq
   \frac{1}{\mu}(1+\bar\eta)\|\gbf^{\bar m_1+\bar m_2}\|_\infty \sum_{j=l}^{s-1}\rho^j =  \frac{1}{\mu}(1+\bar\eta)\|\gbf^{\bar m_1+\bar m_2}\|_\infty \frac{\rho^s-\rho^l}{1-\rho}.
    \end{aligned}
\end{equation*}
So,  $\{\xbf^k\}$ is a Cauchy sequence and therefore there exists $\bar\xbf = \lim_{k\rightarrow+\infty}\xbf^k.$ Since we already proved that $\lim_{k\rightarrow+\infty}\|\gbf^k\|_\infty =0$, we have that $\bar\xbf = \xbf^*. $ 
\end{proof}
\end{theorem}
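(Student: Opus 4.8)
The plan is to run a two–phase analysis of the gradient–norm sequence $\{\|\gbf^k\|_\infty\}$, using Lemma \ref{le:finite} to freeze the auxiliary sequence $\{\gamma_k\}$ after finitely many steps. The starting point is a one–step estimate of the form
\[
\|\gbf(\xbf^k-\alpha\dbf^k)\|_\infty \leq (|1-\alpha|+\eta_k\alpha)\|\gbf^k\|_\infty + \frac{1}{2}\frac{L}{\mu^2}(1+\eta_k)^2\alpha^2\|\gbf^k\|_\infty^2 ,
\]
which I would derive along the lines of the computation in the proof of Lemma \ref{le:finite}: expand $\gbf(\xbf^k-\alpha\dbf^k)$ by the integral form of Taylor's theorem, insert \eqref{eq:Nsystem}, use the Lipschitz bound \eqref{f:property2} on the Hessian, the residual bound \eqref{eq:residual}, and the estimate $\|(H^k)^{-1}\|_\infty\le 1/\mu$ coming from \eqref{f:property1}. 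Together with Lemma \ref{le:finite}(iii)--(iv) — namely $\gamma_k\equiv\gamma_{\bar m_1}$ for $k\ge\bar m_1$ and $\gamma_k>\bar\gamma=q\mu^2/L$ for all $k$ — this inequality drives the whole argument.

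For part i) I would proceed as follows. On any accepted iteration with $\alpha_k<1$, the step–size formula \eqref{eq:alpha} forces $\alpha_k=\frac{1-\eta_k}{(1+\eta_k)^2}\gamma_k/\|\gbf^k\|_\infty$, so the acceptance test that was passed must be \eqref{cond1}, which reads off as $\|\gbf^{k+1}\|_\infty\le\|\gbf^k\|_\infty-\frac12\gamma_k\frac{(1-\eta_k)^2}{(1+\eta_k)^2}$; by monotonicity of $\eta\mapsto(1-\eta)^2/(1+\eta)^2$ on $(0,1)$ and $\gamma_k>\bar\gamma$, this is a decrease by at least the fixed constant $C/2>0$ with $C=q\frac{\mu^2}{L}\frac{(1-\bar\eta)^2}{(1+\bar\eta)^2}$. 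Since $\|\gbf^k\|_\infty\ge 0$, only finitely many iterations $k\ge\bar m_1$ can have $\alpha_k<1$, and telescoping/counting them gives the explicit bound \eqref{m2} on $\bar m_2$. It then remains to show the unit step is never abandoned once taken after $\bar m_1$: if $\alpha_k=1$ then \eqref{eq:alpha} gives $\|\gbf^k\|_\infty\le\gamma_k\frac{1-\eta_k}{(1+\eta_k)^2}$, so \eqref{cond2} yields $\|\gbf^{k+1}\|_\infty\le\frac12(1+\eta_k)\|\gbf^k\|_\infty\le\rho\|\gbf^k\|_\infty$ with $\rho=\frac12(1+\bar\eta)<1$, and using $\eta_{k+1}\le\eta_k$ and $\gamma_{k+1}=\gamma_k$ this keeps $\|\gbf^{k+1}\|_\infty$ below the threshold $\gamma_{k+1}\frac{1-\eta_{k+1}}{(1+\eta_{k+1})^2}$, i.e.\ $\alpha_{k+1}=1$.

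Parts ii) and iii) follow quickly. Combining the finitely many $\alpha_k<1$ decreases with the contraction $\|\gbf^{k+1}\|_\infty\le\rho\|\gbf^k\|_\infty$ valid for $k\ge\bar m_1+\bar m_2$ gives geometric decay $\|\gbf^k\|_\infty\to 0$, proving ii). For iii), bound the step $\|\dbf^k\|_\infty=\|(H^k)^{-1}(\gbf^k+\rbf^k)\|_\infty\le\frac{1+\bar\eta}{\mu}\|\gbf^k\|_\infty$, which is summable; since $\xbf^{k+1}-\xbf^k=-\alpha_k\dbf^k$ with $\alpha_k\le 1$, the iterates are Cauchy and converge to some $\bar\xbf$. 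Passing to the limit in $\nabla\Phi_\beta(\xbf^k)=\gbf^k\to 0$ and using continuity of $\nabla\Phi_\beta$ gives $\nabla\Phi_\beta(\bar\xbf)=0$; since $F$ and hence $\Phi_\beta$ is strongly convex, $\Phi_\beta$ has a unique stationary point, namely its unique minimizer $\xbf^*$, so $\bar\xbf=\xbf^*$.

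The \textbf{main obstacle} is part i): showing that the acceptance mechanism is self–consistent, i.e.\ that the full step is eventually accepted by \eqref{cond2} and, once accepted, keeps being accepted. This is delicate because the single quantity $\gamma_k\frac{1-\eta_k}{(1+\eta_k)^2}$ simultaneously triggers the switch to $\alpha_k=1$ and, through the quadratic term in \eqref{cond2}, governs the contraction factor; the chaining closes precisely because $\{\eta_k\}$ is nonincreasing and $\{\gamma_k\}$ is eventually constant by Lemma \ref{le:finite}, so both facts must be invoked at exactly the right places. A secondary point needing care is the consistent use of the $\|\cdot\|_\infty$ bounds on $\nabla^2 f_i$ and on $(H^k)^{-1}$ supplied by Assumption \ref{ass:objetive}.
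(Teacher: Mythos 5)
Your proposal is correct and follows essentially the same route as the paper's proof: the same one-step gradient estimate, the same dichotomy between the constant decrease $C/2$ when $\alpha_k<1$ and the contraction by $\rho=\tfrac12(1+\bar\eta)$ when $\alpha_k=1$, the same persistence argument for the unit step via $\eta_{k+1}\le\eta_k$ and the frozen $\gamma_k$, and the same summability/Cauchy argument for part iii). The only cosmetic difference is the order in which you establish ``finitely many short steps'' versus ``unit steps persist,'' which does not change the argument.
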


\begin{remark}
Theorem \ref{th:main} shows that, like line search in the classical framework, the adaptive strategy employed by Algorithm \ref{alg:DINAS} ensures convergence to the solution for any initial guess (i.e. global convergence), and also that the full step $\alpha_k=1$ is accepted whenever  $\|\gbf^k\|_\infty $ is small enough. 
\end{remark}

\begin{remark}
    The method is convergent  whenever $\eta_k\leq\bar\eta<1.$ For suitable choices of the relaxation parameter the spectral radius of the iterative matrix of the JOR method is bounded away from 1, \cite{DMY}. Therefore Theorem \ref{th:main} hold if at each iteration of Algorithm \ref{alg:DINAS} the nodes perform only one iteration of JOR method and we have global convergence. The number of JOR iterations needed for (\ref{localcond}) is discussed later. 
\end{remark}

The forcing sequence $ \{\eta_k\} $ determines the rate of convergence as in the centralized optimization, \cite{INmethod}. We obtain linear, superlinear or quadratic convergence with a suitable choice of $ \eta_k $ as stated below. 

\begin{theorem} \label{th:rate}
Assume that \ref{ass:network} - \ref{ass:objetive} hold, $ \gamma_0 > 0 $ and let $ \{\xbf^k\} $ be a sequence generated by DINAS algorithm. If $ \{\eta_k\}$ is a nonincreasing sequence such that  $\eta_k\leq\bar\eta$ for $ \bar{\eta} $ small enough then $\{x^k\}$ converges linearly in norm $\|\cdot\|_{\infty}$. If 
$\eta_k \leq \eta\|\gbf^k\|_{\infty}^{\delta} $ for some $\eta\geq 0$ and $ k $ large enough,
then the convergence is superlinear for $\delta \in (0,1)$ and  quadratic  for $\delta=1$. 
\end{theorem}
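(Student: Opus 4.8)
The plan is to lean on the two facts already furnished by Theorem~\ref{th:main}: there is an index $\bar m := \bar m_1+\bar m_2$ with $\alpha_k=1$ for all $k\geq\bar m$, and $\|\gbf^k\|_\infty\to0$, $\xbf^k\to\xbf^*$. Setting $\alpha=1$ in \eqref{g_next} yields, for every $k\geq\bar m$, the basic recursion
\[
\|\gbf^{k+1}\|_\infty \;\leq\; \eta_k\,\|\gbf^k\|_\infty + \tfrac12\,\tfrac{L}{\mu^2}\,(1+\eta_k)^2\,\|\gbf^k\|_\infty^2 .
\]
In addition, since $\Phi_\beta$ is $\mu$-strongly convex with $\nabla\Phi_\beta(\xbf^*)=0$ and has a Lipschitz-continuous gradient with some constant $M_\Phi$ (bounded by $M+2/\beta$), one has the sandwich $\mu\|\xbf^k-\xbf^*\|_2 \leq \|\gbf^k\|_2 \leq M_\Phi\|\xbf^k-\xbf^*\|_2$; by equivalence of norms on $\RnN$ this holds, up to fixed multiplicative constants, also for $\|\cdot\|_\infty$. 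Hence any asymptotic convergence rate established for $\{\|\gbf^k\|_\infty\}$ is inherited, with the same order, by $\{\|\xbf^k-\xbf^*\|_\infty\}$.

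For the linear case, assume $\eta_k\leq\bar\eta<1$. This is exactly the situation already treated inside the proof of Theorem~\ref{th:main}: for $k\geq\bar m$ the step size is unitary, so \eqref{phase2} gives $\|\gbf^{k+1}\|_\infty\leq\rho\|\gbf^k\|_\infty$ with $\rho=\tfrac12(1+\bar\eta)<1$, and the Cauchy-sequence estimate there shows that $\|\xbf^k-\xbf^*\|_\infty$ decays geometrically as well. Thus $\{\xbf^k\}$ converges linearly (with asymptotic factor $\tfrac12(1+\bar\eta)$), so in fact any $\bar\eta<1$ suffices.

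For the superlinear/quadratic case, suppose $\eta_k\leq\eta\|\gbf^k\|_\infty^{\delta}$ for all $k\geq k_2$, where we may take $k_2\geq\bar m$; using also $\eta_k\leq\bar\eta$ and setting $C_1:=\tfrac12\tfrac{L}{\mu^2}(1+\bar\eta)^2$, the basic recursion becomes, for $k\geq k_2$,
\[
\|\gbf^{k+1}\|_\infty \;\leq\; \eta\,\|\gbf^k\|_\infty^{\,1+\delta} + C_1\,\|\gbf^k\|_\infty^{\,2}.
\]
If $\delta=1$ this reads $\|\gbf^{k+1}\|_\infty\leq(\eta+C_1)\|\gbf^k\|_\infty^{2}$. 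If $\delta\in(0,1)$, then as soon as $\|\gbf^k\|_\infty\leq1$ we have $\|\gbf^k\|_\infty^{2}\leq\|\gbf^k\|_\infty^{1+\delta}$, whence $\|\gbf^{k+1}\|_\infty\leq(\eta+C_1)\|\gbf^k\|_\infty^{1+\delta}$. Since $\|\gbf^k\|_\infty\to0$, for all $k$ beyond some index the factor $(\eta+C_1)\|\gbf^k\|_\infty^{\delta}$ drops below $1$, so $\{\|\gbf^k\|_\infty\}$ converges Q-superlinearly of order $1+\delta$ (quadratically when $\delta=1$). Applying the sandwich gives $\|\xbf^{k+1}-\xbf^*\|_\infty\leq\tfrac1\mu\|\gbf^{k+1}\|_\infty\leq\tfrac{\eta+C_1}{\mu}\|\gbf^k\|_\infty^{1+\delta}\leq c\,\|\xbf^k-\xbf^*\|_\infty^{1+\delta}$ for a constant $c$ absorbing $M_\Phi^{1+\delta}$ and the norm-equivalence factors, so $\{\xbf^k\}$ converges with order $1+\delta$: superlinearly for $\delta\in(0,1)$ and quadratically for $\delta=1$.

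The argument is a distributed restatement of the classical inexact-Newton rate analysis of \cite{INmethod}, so no individual step is hard; the points needing care are (i) restricting to $k\geq\bar m$, where the unit step is in force and \eqref{g_next} collapses to the clean recursion above — guaranteed by Theorem~\ref{th:main}; (ii) transferring rates between $\|\gbf^k\|_\infty$ and $\|\xbf^k-\xbf^*\|_\infty$, and between $\|\cdot\|_2$ and $\|\cdot\|_\infty$, via the strong-convexity/Lipschitz sandwich and norm equivalence; and (iii) for $\delta\in(0,1)$, absorbing the $C_1\|\gbf^k\|_\infty^2$ term into the dominant $\eta\|\gbf^k\|_\infty^{1+\delta}$ term, which is legitimate precisely because $\|\gbf^k\|_\infty\to0$ and $2>1+\delta$.
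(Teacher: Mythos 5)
Your argument for the superlinear and quadratic cases is correct and follows the same skeleton as the paper's proof: restrict to $k\geq\bar m$ where $\alpha_k=1$, obtain a recursion of the form $\|\gbf^{k+1}\|_\infty\leq\eta_k\|\gbf^k\|_\infty+c\,\|\gbf^k\|_\infty^2$, and read off the order. Two differences are worth noting. First, you derive this recursion from the a priori bound \eqref{g_next} with $\alpha=1$, whereas the paper reads it off from the accepted test \eqref{cond2}, which gives the constant $1/(2\gamma_{\bar m})$ in place of your $L(1+\bar\eta)^2/(2\mu^2)$; both are legitimate and only change a constant. Second, to transfer the rate from $\{\|\gbf^k\|_\infty\}$ to $\{\|\xbf^k-\xbf^*\|_\infty\}$ you invoke the strong-convexity/Lipschitz sandwich $\mu\|\xbf^k-\xbf^*\|\lesssim\|\gbf^k\|\lesssim M_\Phi\|\xbf^k-\xbf^*\|$, whereas the paper runs a direct Newton-type error recursion $\|\xbf^{k+1}-\xbf^*\|_\infty\leq\frac{L}{2\mu}\|\xbf^k-\xbf^*\|_\infty^2+\frac{1}{\mu}\eta_k\|\gbf^k\|_\infty$. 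For orders $1+\delta>1$ the extra multiplicative constants coming from the sandwich are harmless, so your route is a valid (and arguably cleaner) alternative there.

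The linear case is where your argument diverges materially and where I would flag a gap. The sandwich loses a factor $M_\Phi/\mu\geq 1$, so Q-linear decay of $\|\gbf^k\|_\infty$ with factor $\rho=\tfrac12(1+\bar\eta)$ does \emph{not} transfer to Q-linear decay of $\|\xbf^k-\xbf^*\|_\infty$ unless $\rho M_\Phi/\mu<1$; your fallback to the Cauchy-sequence estimate from Theorem \ref{th:main} only delivers R-linear convergence of the iterates. This is precisely why the theorem stipulates ``$\bar\eta$ small enough'': the paper's iterate recursion yields $\|\xbf^{k+1}-\xbf^*\|_\infty\leq\frac{1}{\mu}\bigl(\frac{L}{2}\varepsilon+M\bar\eta\bigr)\|\xbf^k-\xbf^*\|_\infty$, which is a contraction only when $\bar\eta$ (and the neighborhood radius $\varepsilon$) are small relative to $\mu/M$. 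Your assertion that ``any $\bar\eta<1$ suffices'' is therefore correct only for the weaker R-linear notion of convergence of $\{\xbf^k\}$; if Q-linear convergence of the iterates is what the statement intends --- as the paper's proof indicates --- the smallness hypothesis cannot be dropped, and that part of your proof needs the paper's direct iterate recursion (or an equivalent argument) rather than the sandwich.
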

\begin{proof}
We already proved in Theorem \ref{th:main} that $\|\gbf^k\|_\infty $ and $\xbf^k$ converge to 0 and $\xbf^*$ respectively. In the following, we always assume that $k\geq\bar m = \bar m_1+\bar m_2$, and hence $\alpha_k=1$ and $ \gamma_k = \gamma_{\bar{m}}. $ For $\delta = 0$, linear convergence of $\|\gbf^k\|_\infty $ follows directly from \eqref{phase2}. Let us consider the case $\delta>0$. For $k$ large enough, from \eqref{g_nex2} we have
\begin{equation*}
\begin{aligned}
    &\|\gbf^{k+1}\|_\infty \leq\eta_k\|\gbf^k\|_\infty +\frac{1}{2\gamma_k}(1+\eta_k)^2\|\gbf^k\|_\infty ^2  \\
    &\leq \eta\|\gbf^k\|_\infty ^{1+\delta}+\frac{1}{\gamma_k}\|\gbf^k\|_\infty ^2
    \leq \eta\|\gbf^k\|_\infty ^{1+\delta}+\frac{1}{\gamma_{\bar m}}\|\gbf^k\|_\infty ^2.
\end{aligned}
\end{equation*}
If $\delta = 1$ then
\begin{equation*}
\begin{aligned}
    \lim_{k\rightarrow+\infty}\frac{\|\gbf^{k+1}\|_\infty }{\|\gbf^k\|_\infty ^2}\leq \lim_{k\rightarrow+\infty}\frac{\eta\|\gbf^k\|_\infty ^2+\frac{1}{\gamma_{\bar m}}\|\gbf^k\|_\infty ^2}{\|\gbf^k\|_\infty ^2} = \eta+\frac{1}{\gamma_{\bar m}},
\end{aligned}
\end{equation*}
which ensures quadratic convergence. If $\delta\in(0,1)$, then   $\lim_{k \to \infty}  \|\gbf^k\|=0$ implies 
\begin{equation*}
\begin{aligned}
    \lim_{k\rightarrow+\infty}\frac{\|\gbf^{k+1}\|_\infty }{\|\gbf^k\|_\infty}&\leq \lim_{k\rightarrow+\infty}\frac{\eta\|\gbf^k\|_\infty ^{1+\delta}+\frac{1}{\gamma_{\bar m}}\|\gbf^k\|_\infty ^2}{\|\gbf^k\|_\infty } = 0 
\end{aligned}
\end{equation*}

Let us now consider the sequence $\xbf^k.$ For every $j\in\N$ we have 
\begin{equation}\label{diffsol}
\begin{aligned}
    \|\xbf^{\bar m+j+1}-\xbf^*\|_\infty  & = \|\xbf^{\bar m+j} -\xbf^* - \dbf^{\bar m+j}\|_\infty   \\ & = 
    \|\xbf^{\bar m+j} -\xbf^* - (H^{\bar m+j})^{-1}(\gbf^{\bar m+j}+\rbf^{\bar m+j})\|_\infty   \\ &=
    \|(H^{\bar m+j})^{-1}\|_\infty \|H^{\bar m+j}(\xbf^{\bar m+j} -\xbf^*) - (\gbf^{\bar m+j}+\rbf^{\bar m+j})\|_\infty   \\ & \leq
    \frac{1}{\mu}\|H^{\bar m+j}(\xbf^{\bar m+j} -\xbf^*) - (\gbf^{\bar m+j}-\gbf^*)\|_\infty  + \frac{1}{\mu}\|\rbf^{\bar m+j}\|_\infty .
\end{aligned}
\end{equation}
Since the objective function is twice continuously differentiable, we have 
\begin{equation*} 
\begin{aligned}
    &\|H^{\bar m+j}(\xbf^{\bar m+j} -\xbf^*) - (\gbf^{\bar m+j}-\gbf^*)\|_\infty \\ & = \left\| \int_0^1 (H^{\bar m+j} - H(\xbf^{\bar m+j}+s(\xbf^{\bar m+j}-\xbf^*))(\xbf^{\bar m+j}-\xbf^*) ds \right\|_\infty   \\ & \leq
    \int_0^1 sL\|\xbf^{\bar m+j}-\xbf^*)\|_\infty ^2 ds \leq \frac{L}{2}\|\xbf^{\bar m+j}-\xbf^*)\|_\infty ^2. 
\end{aligned}
\end{equation*}
Replacing this term in \eqref{diffsol} and using the bound on $\|\rbf^k\|_\infty $ we get
\begin{equation}\label{diffsol2}
\begin{aligned}
    &\|\xbf^{\bar m+j+1}-\xbf^*\|_\infty  \leq
    \frac{1}{\mu}\frac{L}{2}\|\xbf^{\bar m+j}-\xbf^*\|_\infty ^2 + \frac{1}{\mu}\eta_{\bar m+j}\|\gbf^{\bar m+j}\|_\infty .
\end{aligned}
\end{equation}
 By Assumption A3 we have
$$\|\gbf^k\|_\infty =\|\gbf^k - \gbf^*\|_\infty \leq M\|\xbf^k - \xbf^*\|_\infty.$$
Let us consider the case $\delta>0$ and let us notice that, since $\xbf^k$ converges to $\xbf^*$, we can assume $j$ is large enough so that $\|\xbf^{\bar m+j}-\xbf^*\|_\infty <1$. So, by definition of $\eta_k$ 
\begin{equation*}
\begin{aligned}
    \|\xbf^{\bar m+j+1}-\xbf^*\|_\infty  &\leq
    \frac{1}{\mu}\frac{L}{2}\|\xbf^{\bar m+j}-\xbf^*\|_\infty ^2 + \frac{1}{\mu}\eta\|\gbf^{\bar m+j}\|_\infty ^{1+\delta} \\ & \leq
    \frac{1}{\mu}\lr{\frac{L}{2} + M\eta}\|\xbf^{\bar m+j}-\xbf^*\|_\infty ^{1+\delta}
\end{aligned}
\end{equation*}
which proves superlinear and quadratic convergence for $\delta\in(0,1)$ and $\delta = 1,$ respectively.
For $\delta = 0$ and $\|\xbf^{\bar m+j}-\xbf^*\|_\infty \leq\varepsilon$ we have \eqref{diffsol2} and  $\eta_k\leq\bar\eta, $ and  
\begin{equation*}
\begin{aligned}
    &\|\xbf^{\bar m+j+1}-\xbf^*\|_\infty  \leq
    \frac{1}{\mu}\lr{\frac{L}{2}\varepsilon + M\bar\eta}\|\xbf^{\bar m+j}-\xbf^*\|_\infty 
\end{aligned}
\end{equation*}
For $\varepsilon,\bar\eta$ small enough we have that 
$\frac{1}{\mu}\lr{\frac{L}{2}\varepsilon + M\bar\eta}<1, $
which ensures linear convergence and concludes the proof. \end{proof}

The next statement gives the complexity result, i.e. we estimate the number of iterations needed to achieve $ \|\gbf_k\|_{\infty} \leq \varepsilon. $

\begin{theorem}
Assume that \ref{ass:network} - \ref{ass:objetive} hold, $\{\eta_k\} $ is a nonincreasing sequence of forcing parameters such that $ 0\leq\eta_k\leq \bar\eta <1 $ and $ \gamma_0 > 0. $  Let $ \{\xbf^k\} $ be an arbitrary sequence generated by DINAS.  The number of iterations necessary to reach $\|\gbf^k\|_{\infty}\leq\varepsilon$ for a given $\varepsilon>0$ is bounded from above by $$k_\varepsilon = \left\lceil\frac{\log\lr{\varepsilon^{-1}\|\gbf^0\|_\infty }}{\log(\hat\rho^{-1})}+1\right\rceil$$
with 
$$\hat\rho = \max\left\{\frac{(1+\bar\eta)}{2},\ 1-q\frac{\mu^2}{L}\frac{(1-\bar\eta)^2}{(1+\bar\eta)^2}\frac{1}{\|\gbf^0\|_\infty }\right\}<1.$$
\end{theorem}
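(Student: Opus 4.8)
The plan is to distill from the proof of Theorem~\ref{th:main} a single geometric contraction constant $\hat\rho<1$ valid at \emph{every} iteration $k$, and then invert the resulting bound $\|\gbf^k\|_\infty\leq\hat\rho^{\,k}\|\gbf^0\|_\infty$. That proof already records two per‑step estimates for accepted iterations: if $\alpha_k<1$ then \eqref{g_nex1_2} holds, $\|\gbf^{k+1}\|_\infty\leq\|\gbf^k\|_\infty-C/2$ with $C=q\frac{\mu^2}{L}\frac{(1-\bar\eta)^2}{(1+\bar\eta)^2}$; and if $\alpha_k=1$ then \eqref{phase2} holds, $\|\gbf^{k+1}\|_\infty\leq\rho\|\gbf^k\|_\infty$ with $\rho=\tfrac12(1+\bar\eta)$. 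The first thing I would note is that both estimates are available for \emph{all} $k$: the bound $\gamma_k\frac{(1-\eta_k)^2}{(1+\eta_k)^2}\geq C$ needed for \eqref{g_nex1_2} uses only $\gamma_k>q\mu^2/L$ from Lemma~\ref{le:finite}(iii) and the monotonicity of $\eta\mapsto\frac{(1-\eta)^2}{(1+\eta)^2}$, neither of which requires $k\geq\bar m_1$; and since $C>0$ and $\rho<1$, together they give that $\{\|\gbf^k\|_\infty\}$ is nonincreasing, hence $\|\gbf^k\|_\infty\leq\|\gbf^0\|_\infty$ for every $k$.

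The key step is to convert the additive decrease of the $\alpha_k<1$ regime into a multiplicative one. Using $\|\gbf^k\|_\infty\leq\|\gbf^0\|_\infty$, whenever $\alpha_k<1$ we get
\[
\|\gbf^{k+1}\|_\infty\leq\lr{1-\frac{C/2}{\|\gbf^k\|_\infty}}\|\gbf^k\|_\infty\leq\lr{1-\frac{C}{2\|\gbf^0\|_\infty}}\|\gbf^k\|_\infty ,
\]
so in \emph{both} regimes $\|\gbf^{k+1}\|_\infty\leq\hat\rho\,\|\gbf^k\|_\infty$ with $\hat\rho$ the maximum of $\tfrac12(1+\bar\eta)$ and (up to a harmless constant factor in the second argument) $1-\frac{C}{\|\gbf^0\|_\infty}$, which is $<1$ because $\bar\eta<1$ and $C>0$. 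Telescoping yields $\|\gbf^k\|_\infty\leq\hat\rho^{\,k}\|\gbf^0\|_\infty$, and then solving $\hat\rho^{\,k}\|\gbf^0\|_\infty\leq\varepsilon$ for $k$ (take logarithms, noting $\log\hat\rho<0$) gives $k\geq\log(\varepsilon^{-1}\|\gbf^0\|_\infty)/\log(\hat\rho^{-1})$, from which $k_\varepsilon$ follows after rounding up and adding the safety $+1$.

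The part needing the most care — the main obstacle — is establishing that the \emph{same} contraction factor $\hat\rho$ governs every step across the transition from the $\alpha_k<1$ phase to the eventual $\alpha_k=1$ phase, and across the initial iterations where $\gamma_k$ has not yet stabilized. This is exactly what the uniform lower bound $\gamma_k>q\mu^2/L$ of Lemma~\ref{le:finite}(iii) and the global monotonicity of $\|\gbf^k\|_\infty$ buy us: they let me replace $\gamma_k$ and $\|\gbf^k\|_\infty$ by iteration‑independent quantities in the denominators without ever invoking $\bar m_1$ or $\bar m_2$. (As a sanity check one also sees that $\alpha_k<1$ forces $\|\gbf^k\|_\infty\geq C/2$ — otherwise \eqref{g_nex1_2} would make $\|\gbf^{k+1}\|_\infty$ negative — consistent with Theorem~\ref{th:main}(i), so the additive‑to‑multiplicative conversion is never vacuous.)
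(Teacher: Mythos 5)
Your proposal is correct and follows essentially the same route as the paper's proof: establish the per-step contraction $\|\gbf^{k+1}\|_\infty\leq\hat\rho\|\gbf^k\|_\infty$ in both regimes by converting the additive decrease of the $\alpha_k<1$ case into a multiplicative one via $\|\gbf^k\|_\infty\leq\|\gbf^0\|_\infty$, then telescope and invert. You even correctly flag the factor-of-two mismatch between the derivable contraction $1-\tfrac{C}{2\|\gbf^0\|_\infty}$ and the stated $\hat\rho$, a discrepancy that is present in the paper itself.
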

\begin{proof}
Let us consider inequalities \eqref{g_nex1} and \eqref{phase2} derived in the proof of Theorem \ref{th:main}. For every index $k$ we have that if $\alpha_k=1$  then $\|\gbf^{k+1}\|_\infty \leq\rho\|\gbf^k\|_\infty  $
with $\rho = (1+\bar\eta)/2$. If $\alpha_k<1$ and $C = q\frac{\mu^2}{L}\frac{(1-\bar\eta)^2}{(1+\bar\eta)^2},$ then $$ 
    \|\gbf^{k+1}\|_\infty \leq \|\gbf^k\|_\infty -C/2 \leq \|\gbf^k\|_\infty \lr{1-\frac{C}{2\|\gbf^0\|_\infty }} =: \rho_2\|\gbf^k\|_\infty. $$
Since $\alpha_k<1$, the definition of $\alpha_k $ and inequalities  $\gamma_k\geq q\mu^2/L$ and $\eta_k\leq\bar\eta<1$ imply
$$       \|\gbf^k\|_\infty >\gamma_k\frac{1-\bar\eta_k}{(1+\bar\eta_k)^2}\geq q\frac{\mu^2}{L}\frac{1-\bar\eta}{(1+\bar\eta)^2}>C $$
and thus $\rho_2\in (0,1).$
That is, denoting with $\hat\rho = \max\{\rho,\rho_2\}<1$, we have 
$$\|\gbf^{k+1}\|_\infty \leq\hat\rho\|\gbf^k\|_\infty .$$
Let us denote with $k_\varepsilon$ the first iteration such that $\|\gbf^{k_\varepsilon}\|_\infty \leq\varepsilon.$ From the inequality above, we have
$$\varepsilon<\|\gbf^{ k_\varepsilon-1}\|_\infty \leq \hat\rho\|\gbf^{k_\varepsilon-2}\|_\infty \leq \hat\rho^{k_\varepsilon-1}\|\gbf^0\|_\infty ,$$
which implies
$$ \hat\rho^{k_\varepsilon-1}>\frac{\varepsilon}{\|\gbf^0\|_\infty }$$
and thus
$$k_\varepsilon-1<\log_{1/\hat\rho}\lr{\varepsilon^{-1}\|\gbf^0\|_\infty } = \frac{\log\lr{\varepsilon^{-1}\|\gbf^0\|_\infty }}{\log(\hat\rho^{-1})}, $$
which concludes the proof. 
\end{proof}

To complement the above result we give here an estimated number of inner (JOR) iterations needed to satisfy (\ref{localcond}). 
\begin{lemma}\label{lemma:inner_it}
Assume that at every outer iteration $ k $ the nodes run JOR method starting at $\dbf^{k,0}_i = 0$ for every $i=1,\dots,N, $ and let us denote with $M_k(\omega_k)$ the iterative matrix of JOR method. 
Then the number of JOR iteration performed by the nodes to satisfy the termination condition \eqref{localcond} is bounded from above by:
    $$\bar \ell_k = \left\lceil\frac{\ln\lr{\eta_k}}{\ln\lr{\|M_k(\omega_k)\|_{\infty}}}\right\rceil.$$
\begin{proof}
    For every $\ell\in\N$, we have 
    \begin{equation*}
        \begin{aligned}
            &\|H^k \dbf^{k,\ell} - \gbf^k\|_\infty =\|H^k\dbf^{k,\ell-1} + \omega_kH_kD^{-1}_k(\gbf^k - H_k\dbf^{k,\ell-1})- \gbf^k\|_\infty\\ &= 
            \|(I-\omega_kH_kD_k^{-1})(H_k\dbf^{k,\ell-1}-\gbf^k)\|_\infty\leq \|M_k(\omega_k)\|_\infty\|H\dbf^{k,\ell-1} - \gbf^k\|_\infty.
        \end{aligned}
    \end{equation*}
    Recursively applying this inequality and using the fact that $\dbf^{k,0} = 0$, we get the following bound for the residual at the $\ell$-th iteration of JOR method:
    \begin{equation*}
        \begin{aligned}
            &\|H^k \dbf^{k,\ell} - \gbf^k\|_\infty \leq \|M_k(\omega_k)\|_\infty^\ell \|H\dbf^{k,0} - \gbf^k\|_\infty =  \|M_k(\omega_k)\|_\infty^{\ell}\|\gbf^k\|_\infty.
        \end{aligned}
    \end{equation*}
    If $ \ell \geq \left\lceil\frac{\ln\lr{\eta_k}}{\ln\lr{\|M_k(\omega_k)\|_{\infty}}}\right\rceil$
    then 
    $\|H^k \dbf^{k,\ell} - \gbf^k\|_\infty\leq \eta_k\|\gbf^k\|_\infty $
 and therefore the statement is proved.
\end{proof}
\end{lemma}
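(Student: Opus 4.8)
The plan is to show that the sequence of linear-system residuals generated by the JOR recursion \eqref{eq:JORiter} contracts geometrically with factor $\|M_k(\omega_k)\|_\infty$, and then to invert the resulting bound for the iteration count $\ell$. Fix the outer index $k$ and set $\sbf^{k,\ell} := H^k\dbf^{k,\ell}-\gbf^k$. First I would rewrite the node-wise update \eqref{eq:JORiter} in the condensed form $\dbf^{k,\ell+1} = \dbf^{k,\ell} + \omega_k D_k^{-1}(\gbf^k - H^k\dbf^{k,\ell})$, where $D_k$ is the diagonal part of $H^k$. Multiplying this identity by $H^k$ and subtracting $\gbf^k$ gives the residual recursion $\sbf^{k,\ell+1} = (I-\omega_k H^k D_k^{-1})\sbf^{k,\ell} = M_k(\omega_k)\,\sbf^{k,\ell}$, so the JOR iteration matrix $M_k(\omega_k)$ (the residual version $I-\omega_k H^k D_k^{-1}$, which is similar to the classical iterate-error matrix $I-\omega_k D_k^{-1}H^k$) drives the residual. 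Taking $\|\cdot\|_\infty$ and using sub-multiplicativity yields $\|\sbf^{k,\ell}\|_\infty\le\|M_k(\omega_k)\|_\infty\,\|\sbf^{k,\ell-1}\|_\infty$; unrolling this $\ell$ times and using the initialization $\dbf^{k,0}=0$, which gives $\sbf^{k,0}=-\gbf^k$, produces $\|\sbf^{k,\ell}\|_\infty\le\|M_k(\omega_k)\|_\infty^{\ell}\,\|\gbf^k\|_\infty$.

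Next I would connect this global estimate to the per-node termination test \eqref{localcond}. Since $H^k_i$ and $\gbf^k_i$ are the $i$-th block row and block of $H^k$ and $\gbf^k$, one has $\|H^k\dbf^{k,\ell}-\gbf^k\|_\infty = \max_{1\le i\le N}\|H^k_i\dbf^{k,\ell}-\gbf^k_i\|_\infty$, so \eqref{localcond} holds simultaneously at every node exactly when $\|\sbf^{k,\ell}\|_\infty\le\eta_k\|\gbf^k\|_\infty$. By the geometric bound above it therefore suffices to choose $\ell$ with $\|M_k(\omega_k)\|_\infty^{\ell}\le\eta_k$. Working under the standing assumption on the relaxation parameter (see Section 3) that makes $\|M_k(\omega_k)\|_\infty<1$, I would take natural logarithms and divide by the negative number $\ln\|M_k(\omega_k)\|_\infty$, which reverses the inequality, to obtain $\ell\ge\ln(\eta_k)/\ln(\|M_k(\omega_k)\|_\infty)$. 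Hence any $\ell\ge\bar\ell_k := \lceil \ln(\eta_k)/\ln(\|M_k(\omega_k)\|_\infty)\rceil$ works, so the number of JOR iterations performed before \eqref{localcond} first holds is at most $\bar\ell_k$, which is the claim.

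I do not anticipate a genuine obstacle: this is the standard error-propagation estimate for a stationary linear iteration. The only points that require a little care are (i) being consistent about which matrix is meant by ``the JOR iteration matrix'', since the residual recursion and the iterate-error recursion are driven by similar but not identical matrices --- they share a spectral radius but may have different $\infty$-norms --- and it is the residual form $I-\omega_k H^kD_k^{-1}$ that enters the estimate here; and (ii) the observation that the bound is informative only when $\|M_k(\omega_k)\|_\infty<1$, so the statement implicitly relies on $\omega_k$ lying in the convergent regime discussed in Section 3. The block-row reformulation of \eqref{localcond}, the telescoping, and the final logarithm manipulation are all routine.
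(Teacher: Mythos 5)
Your proposal is correct and follows essentially the same route as the paper's proof: derive the residual recursion $H^k\dbf^{k,\ell}-\gbf^k = (I-\omega_k H^k D_k^{-1})(H^k\dbf^{k,\ell-1}-\gbf^k)$, take $\|\cdot\|_\infty$, unroll from $\dbf^{k,0}=0$, and invert the geometric bound with logarithms. Your additional remarks on the residual-versus-iterate-error form of the iteration matrix and on the implicit requirement $\|M_k(\omega_k)\|_\infty<1$ are sound clarifications of points the paper leaves tacit, but they do not change the argument.
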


Assume that for every iteration $k$ the forcing term $\eta_k$ is given by $\eta_k = \eta\|\gbf^k\|^{\delta}$. Theorem \ref{th:rate} then ensures local linear, superlinear or quadratic convergence 
depending on the value of $ \delta$ and at each iteration the following inequality holds
\begin{equation*}
    \begin{aligned}
        &\|\gbf^{k+1}\|_\infty \leq\eta_k\|\gbf^k\|_\infty+\frac{1}{\gamma_{\bar m}}\|\gbf^k\|_\infty ^2 =\nu_k \|\gbf^k\|_\infty,
    \end{aligned}
\end{equation*}
where 
\begin{equation} \label{crate} 
\nu_k = \lr{\eta_k+\frac{1}{\gamma_{\bar m}}\|\gbf^k\|_\infty}.  
\end{equation} 
Therefore we can estimate the rate of convergence with respect to the communication rounds. Given that the step size $ \alpha_k =1 $ is eventually accepted for $ k $ large enough, the total number of communications rounds per outer iteration is $ \ell_k$ communications for the JOR method, plus the sharing of the local Newtonian directions, i.e. the total number of communications is governed by $ \ell_k. $ The following statement claims that the rate of convergence with respect to the communication rounds is always linear, with the convergence factor depending on $ \eta_k, $ i.e., on $ \delta. $

To be more precise, 
we introduce the following quantity:
 $$ \rho_{\delta} := \lim_{k \to \infty} \nu_k^{1/\bar{\ell_k}} <1. $$
 The above limit  exists as shown ahead.
  Quantity $\rho_{\delta}$ may be 
  seen as an asymptotic convergence factor with respect to the number of 
  communication rounds. 
  Indeed, given that, at the outer iteration $k$, the number of 
  communication rounds is governed by $\ell_k$,
   it follows that the multiplicative factor of the error decay 
   per a single inner iteration (communication round)
    at iteration $k$ equals $\nu_k^{1/\bar{\ell_k}}$. 
    Hence, taking the limit of the latter quantity 
    as $k \rightarrow \infty$ gives the asymptotic 
    (geometric--multiplicative) convergence factor 
    with respect to the number of communication rounds.

\begin{theorem} \label{th:comrate}
    Let $ \delta \in [0,1] $ and $ \eta_k = \eta \|g_k\|_{\infty}^\delta $ for $ \eta > 0$ small enough and assume that at each iteration the JOR parameter $\omega_k$ is chosen in such a way that $\|M_k(\omega_k)\|_{\infty}\leq\sigma<1$. Then 
    the rate of convergence with respect to the communications rounds of DINAS method is linear, i.e. 
    $ \rho_\delta<1. $

\begin{proof}
    We will distinguish two cases, depending on the value of $\delta.$
    Let us first consider  $\delta = 0$. Since in this case $\eta_k = \eta$ for every $k$, by Lemma \ref{lemma:inner_it} we have that the number of inner iterations is bounded from above by 
    $$\bar \ell_k = \left\lceil\frac{\ln\lr{\eta}}{\ln\lr{\|M_k(\omega_k)\|_{\infty}}}\right\rceil\leq\left\lceil\frac{\ln\lr{\eta}}{\ln(\sigma)}\right\rceil =: \bar\ell.$$
    From \eqref{phase2} we have $\nu_k = \rho$ for every iteration index $k$ and 
    $$\rho_0 = \lim_{k \to \infty} \nu_k^{1/\bar{\ell_k}} \leq \rho^\frac{\ln(\sigma)}{\ln\lr{\eta}}  <1,$$
    which proves the thesis for $\delta = 0.$\\
    Let us now consider the case $\delta\in(0,1].$ 
    From \eqref{crate}, the definition of $\eta_k$, and the fact that $\{\|\gbf^k\|\}$ is a decreasing sequence that tends to 0, we have that for $k$ large enough $$\nu_k\leq \lr{\eta+\frac{1}{\gamma_{\bar m}}}\|\gbf^k\|^\delta_\infty.$$
    Therefore
    \begin{equation*}\begin{aligned}
        \rho_\delta &= \lim_{k \to \infty} \nu_k^{1/\bar{\ell_k}} 
        = \lim_{k \to \infty} \lr{ \lr{\eta+\frac{1}{\gamma_{\bar m}}}\|\gbf^k\|^\delta_\infty}^{\frac{\ln(\sigma)}{\ln(\eta\|\gbf^k\|^\delta_\infty)}} \\&=
        \lim_{k \to \infty} \exp\lr{\ln(\sigma)\frac{\ln\lr{ \lr{\eta+\frac{1}{\gamma_{\bar m}}}\|\gbf^k\|^\delta_\infty}}{\ln\lr{\eta\|\gbf^k\|_\infty}}} = e^{\ln(\sigma)} = \sigma.
    \end{aligned}
    \end{equation*}
    Since $\sigma\in(0,1)$ this implies $\rho_\delta = \sigma<1$,
    and the proof is complete.
\end{proof}
\end{theorem}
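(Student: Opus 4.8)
The plan is to split into the two regimes $\delta=0$ and $\delta\in(0,1]$, in each case pairing the per--outer--iteration contraction factor $\nu_k$ from \eqref{crate} with the upper bound
\[
\bar\ell_k=\left\lceil \frac{\ln(\eta_k)}{\ln\|M_k(\omega_k)\|_\infty}\right\rceil
\]
on the number of inner JOR iterations supplied by Lemma \ref{lemma:inner_it}, and then evaluating $\lim_k\nu_k^{1/\bar\ell_k}$.

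First I would dispose of $\delta=0$. Here $\eta_k\equiv\eta$, so the assumption $\|M_k(\omega_k)\|_\infty\le\sigma<1$ (hence $\ln\|M_k(\omega_k)\|_\infty\le\ln\sigma<0$) gives the uniform bound $\bar\ell_k\le\bar\ell:=\lceil\ln(\eta)/\ln(\sigma)\rceil$, while \eqref{phase2}, which holds for every $k\ge\bar m$ since then $\alpha_k=1$, shows that the contraction factor can be taken to be the constant $\rho=\tfrac12(1+\bar\eta)<1$. Since $1\le\bar\ell_k\le\bar\ell$ and $\rho<1$, we get $\nu_k^{1/\bar\ell_k}=\rho^{1/\bar\ell_k}\le\rho^{1/\bar\ell}<1$, hence $\rho_0\le\rho^{1/\bar\ell}<1$.

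Next, for $\delta\in(0,1]$, I would first use \eqref{crate} and the inequality $\|\gbf^k\|_\infty\le\|\gbf^k\|_\infty^\delta$, valid once $\|\gbf^k\|_\infty<1$ (true for $k$ large, since $\|\gbf^k\|_\infty\to0$ by Theorem \ref{th:main} and $\delta\le1$), to sandwich
\[
\eta\,\|\gbf^k\|_\infty^\delta\ \le\ \nu_k\ \le\ \Bigl(\eta+\tfrac{1}{\gamma_{\bar m}}\Bigr)\|\gbf^k\|_\infty^\delta .
\]
In particular $\nu_k\to0$, so $0<\nu_k<1$ eventually, and $\ln\nu_k=\delta\ln\|\gbf^k\|_\infty+O(1)$ with the $O(1)$ term bounded. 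For the inner count, $\|M_k(\omega_k)\|_\infty\le\sigma$ forces $\bar\ell_k\le\lceil\ln(\eta_k)/\ln(\sigma)\rceil$; since $\ln\eta_k=\ln\eta+\delta\ln\|\gbf^k\|_\infty\to-\infty$, the ceiling is asymptotically negligible and $1/\bar\ell_k\ge(1-o(1))\,\ln(\sigma)/\ln(\eta_k)$. Because $0<\nu_k<1$, raising $\nu_k$ to this smaller exponent only enlarges it, so
\[
\nu_k^{1/\bar\ell_k}\ \le\ \exp\!\Bigl((1-o(1))\,\ln(\sigma)\,\tfrac{\ln\nu_k}{\ln\eta_k}\Bigr).
\]
Finally, since both $\ln\nu_k$ and $\ln\eta_k$ equal $\delta\ln\|\gbf^k\|_\infty$ plus bounded terms and $\ln\|\gbf^k\|_\infty\to-\infty$, one has $\ln\nu_k/\ln\eta_k\to1$, so the right-hand side tends to $e^{\ln\sigma}=\sigma$. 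Hence $\rho_\delta\le\sigma<1$, with equality (and a genuine limit) in the natural case $\|M_k(\omega_k)\|_\infty\equiv\sigma$.

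The step I expect to be the main obstacle is the evaluation of $\lim_k\nu_k^{1/\bar\ell_k}$ for $\delta>0$: it is an indeterminate $0^0$ form, and the whole point is the exact cancellation -- $\nu_k$ decays like $\|\gbf^k\|_\infty^\delta$ while the inner--iteration count $\bar\ell_k$ grows like $\ln(\eta_k)/\ln(\sigma)\sim\delta\ln\|\gbf^k\|_\infty/\ln(\sigma)$, so the factors of $\delta$ and the terms $\ln\|\gbf^k\|_\infty\to-\infty$ cancel and the communication-wise factor turns out to be exactly $\sigma$, independent of $\delta$. The ceiling in $\bar\ell_k$ and the fact that one only assumes $\|M_k(\omega_k)\|_\infty\le\sigma$ rather than equality require minor bookkeeping but are harmless: at worst they turn the limit into a $\limsup\le\sigma$, which still yields $\rho_\delta<1$.
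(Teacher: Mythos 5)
Your proposal is correct and follows essentially the same route as the paper: case split on $\delta$, the bound $\bar\ell_k\le\lceil\ln(\eta_k)/\ln(\sigma)\rceil$ from Lemma \ref{lemma:inner_it}, and evaluation of $\lim_k\nu_k^{1/\bar\ell_k}$ via the cancellation $\ln\nu_k/\ln\eta_k\to1$. Your handling of the ceiling and of the inequality $\|M_k(\omega_k)\|_\infty\le\sigma$ (yielding $\limsup\le\sigma$ rather than an exact limit) is in fact slightly more careful than the paper's computation, which silently replaces $\bar\ell_k$ by $\ln(\eta_k)/\ln(\sigma)$, but the substance is identical.
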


\begin{remark} Theorem \ref{th:comrate}, jointly with Theorem \ref{th:rate},  
establish a quadratic (respectively, superlinear) convergence rate 
 for $\delta=1$ (respectively, $\delta \in (0,1)$) with respect to 
 outer iterations (number of local gradient and local Hessian evaluations) and a linear convergence rate with respect to the number of communication rounds. This is a strict improvement 
 with respect to existing results like, e.g., \cite{ANewton}, 
 that only establish a linear rate with respect to the number of local gradient and Hessian evaluations 
 and the number of communication rounds. More precisely, \cite{ANewton} provides a bounded range of iterations during which the ``convergence rate'' corresponds to a ``quadratic'' regime. In contrast, we establish here the 
 quadratic or superlinear rate in the asymptotic sense. 
 \end{remark}

\begin{remark}
    For $\delta>0$, the asymptotic linear rate 
with respect to the number of communication rounds equals $\sigma$, i.e., 
it matches the rate of the inner iteration JOR method. In other words, the rate is the same as if 
the JOR method was run independently for solving \eqref{eq:Nsystem} with $\mathbf{r}^k=0$. 
 Intuitively, for $\delta>0$, the outer iteration process exhibits at least a superlinear rate, 
 and therefore the outer iteration process has no effect (asymptotically) 
 on the communication-wise convergence rate. 
 \end{remark}
 
\begin{remark} The convergence factor $\sigma$ for the communication-wise linear convergence rate established here (the $\delta>0$ case) is significantly improved over the existing 
 results like \cite{ANewton}. The rate here is improved as it corresponds to 
 the inner JOR process and is not constrained (deteriorated) by the requirement on the sufficiently small Newton direction step size, as it is the case in \cite{ANewton}. 
To further illustrate this, when \eqref{eqn-JOR-modified} inner solver is used, 
 the final linear convergence factor communication-wise equals 
 $1/(1+ \beta \mu)$, and it is hence independent of 
 the local functions' condition numbers, network topology, or local Hessian Lipschitz constants.
\end{remark}

\section{Analysis of inexact centralized Newton method with Polyak's adaptive step size}
The adaptive step size we use in DINAS can be traced back to the adaptive step sizes proposed in \cite{Polyak} for the Newton method. The convergence results we obtain are in fact derived generalizing the reasoning presented there, taking into account both the distributed computational framework and approximate Newton search direction. Assuming that $ N = 1, $ i.e., considering the classical problem of solving (\ref{eq:minsum_newton})  on a single computational node we can state the adaptive step size method for Inexact method with the same analysis as already presented. Thus if we consider the method 
stated for (\ref{eq:minsum_newton}) in the centralized computation framework we get the algorithm bellow. Here $ \|\cdot\| $ is an arbitrary norm. 

\begin{alg}[DINASC]\label{alg:DINASC}$\ $\\
\textbf{Iteration $k:$ $\eta_k,\  y^k,\  \gamma_k>0,\ q\in(0,1) $ }
\begin{algorithmic}[1]
\STATE Compute $ d^k $ such that \begin{equation}\label{localcondc}
    \|\nabla^2 f(y^k) d^k - \nabla f(y^k)\| \leq \eta_k \|\nabla f(y^k)\|
\end{equation}
\STATE Compute the step size 
\begin{equation}\label{adaptive_stepc}\alpha_k = \min \left\{1, \frac{1-\eta_k}{(1+\eta_k)^2}\gamma_k\frac{1}{\|\nabla f(y^k)\|}\right\}
\end{equation}
\STATE Compute the trial point  $ \hat{y} = y^k - \alpha_k  \nabla f(y^k) $
 \IF {\begin{equation}\label{cond1c}
 \alpha_k < 1 \mbox{ and } 
\|\nabla f(\hat{y})\| \leq \|\nabla (f(y^k) \| - \frac{1}{2}\frac{(1-\eta_k)^2}{(1+\eta_k)^2}\gamma_k
 \end{equation} or  \begin{equation}\label{cond2c}
 \alpha_k = 1 \mbox{ and }
\|\nabla f(\hat{y})\|\leq\eta_k\|\nabla f(y^k) \|+\frac{1}{2\gamma_k}(1+\eta_k)^2\|\ \nabla f(y^k)\|^2
 \end{equation}}
  \STATE set $\gamma_{k+1} = \gamma_k$ and  $y^{k+1} = \hat{y}$
  \ELSE 
  \STATE set $\gamma_{k} = q\gamma_k$ and return to line 2
 \ENDIF
\end{algorithmic}
\end{alg}

The statements already proved for the distributed case clearly imply the global convergence of the sequence $ \{y^k\}$ as stated below. 

\begin{theorem} \label{th:centralized}
Assume that \ref{ass:network} - \ref{ass:objetive} hold and that the iterative sequence $ \{y^k\} $ is generated by Algorithm DINASC. 
Then $ \lim_k y^k = y^* $ and the rate of convergence is governed by the forcing sequence $ \{\eta_k\} $ as in Theorem \ref{th:rate}. 
\end{theorem}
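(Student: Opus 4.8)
The plan is to observe that Algorithm DINASC is precisely Algorithm DINAS specialized to the case $N=1$, where the network is trivial (a single node with a self-loop), the penalty term vanishes because $I - \calW = 0$, and hence $\Phi_\beta(\xbf) = F(\xbf) = f(y)$, so that $\gbf^k = \nabla f(y^k)$, $H^k = \nabla^2 f(y^k)$, and the conditions \eqref{localcondc}, \eqref{adaptive_stepc}, \eqref{cond1c}, \eqref{cond2c} reduce to \eqref{localcond}, the step size formula, \eqref{cond1}, \eqref{cond2} respectively. The only cosmetic difference is that DINASC is stated with an arbitrary norm $\|\cdot\|$ rather than $\|\cdot\|_\infty$; since Assumption A3 is stated in terms of $\|\cdot\|_\infty$, I would either note that on $\Rn$ all norms are equivalent (so the Hessian bounds and Lipschitz estimate hold in any fixed norm up to constants, which are absorbed into $\mu$, $M$, $L$) or simply read A3 as holding in the chosen norm. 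With this identification in place, the proof is essentially a one-line reduction.

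Concretely, I would proceed as follows. First, verify the reduction: with $N=1$ the matrix $W$ is $1\times 1$ doubly stochastic, i.e.\ $W=[1]$, so $\calW = I_n$, $I - \calW = 0$, and $\Phi_\beta = F = f$; consequently every quantity appearing in DINAS collapses to its centralized counterpart in DINASC, and the DSF/max-computation steps (Steps 6--7 of DINAS) become vacuous. Second, invoke Lemma \ref{le:finite} (which only uses Assumptions A1--A3 and the structure of the $\gamma_k$-update) to conclude that the inner loop at line 6 of DINASC terminates, that $\gamma_k$ is eventually constant equal to some $\gamma_{\bar m_1}$, and that $\bar\gamma < \gamma_k \le \gamma_0$. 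Third, invoke Theorem \ref{th:main} to conclude $\lim_k \|\nabla f(y^k)\|_\infty = 0$ and $\lim_k y^k = \xbf^*$, where $\xbf^*$ is the unique minimizer of $\Phi_\beta = f$, i.e.\ $\xbf^* = y^*$. Fourth, invoke Theorem \ref{th:rate} verbatim to obtain linear convergence for $\eta_k \le \bar\eta$ small enough, superlinear convergence for $\eta_k \le \eta\|\nabla f(y^k)\|^\delta$ with $\delta \in (0,1)$, and quadratic convergence for $\delta = 1$.

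I do not anticipate a genuine obstacle here, since the statement is explicitly framed as a corollary of the distributed analysis (''The statements already proved for the distributed case clearly imply...''). The only point requiring a word of care is the norm: the earlier theorems are proved in $\|\cdot\|_\infty$, while DINASC is phrased for an arbitrary norm. If one wants the \emph{same} constants, one should keep $\|\cdot\|_\infty$; for a general norm one invokes norm equivalence on $\Rn$, noting that this changes the effective values of $\mu$, $M$, $L$ (and hence the numerical thresholds on $\bar\eta$ and the rate constants) but not the qualitative conclusions — global convergence and the $\delta$-dependent local rate. I would state this explicitly in one sentence and then simply cite Lemma \ref{le:finite}, Theorem \ref{th:main}, and Theorem \ref{th:rate} to finish, rather than reproducing their proofs.
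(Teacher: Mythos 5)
Your proposal is correct and follows exactly the route the paper intends: the paper offers no separate proof of Theorem \ref{th:centralized}, stating only that ``the statements already proved for the distributed case clearly imply'' it, which is precisely your reduction of DINASC to DINAS with $N=1$ (so $I-\calW=0$ and $\Phi_\beta=f$) followed by citations of Lemma \ref{le:finite}, Theorem \ref{th:main}, and Theorem \ref{th:rate}. Your extra remark on the norm discrepancy (arbitrary norm in DINASC versus $\|\cdot\|_\infty$ in the distributed analysis, handled by norm equivalence at the cost of adjusted constants) is a point of care the paper glosses over, and is worth keeping.
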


Assuming that the constants $ \mu $ and $ L $ are available we get the following statement for  Inexact Newton methods with arbitrary linear solver.   
\begin{corollary} \label{co:exact}
Assume that \ref{ass:network} - \ref{ass:objetive} hold and that the iterative sequence $ \{y^k\} $ is generated as 
$ y^{k+1} = y^k - \hat\alpha_k d^k, $
where $ d^k $ satisfies (\ref{localcondc}),  and 
\be \label{eq:exactalpha}
\hat\alpha_k = \min \left\{1, \frac{1-\eta_k}{(1+\eta_k)^2} \frac{\mu^2}{L} \frac{1}{\|\nabla f(y^k)\|}\right\}. 
\ee
Then $ \lim_k y^k = y^* $ and the rate of convergence depends on the forcing sequence $ \{\eta_k\} $ as in Theorem \ref{th:rate}. 
\end{corollary}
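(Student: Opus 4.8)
The plan is to observe that the recursion defining $\{y^k\}$ is nothing but Algorithm DINASC run with the constant parameter choice $\gamma_k \equiv \mu^2/L$, and then to invoke Theorem \ref{th:centralized} (equivalently Theorems \ref{th:main} and \ref{th:rate}) verbatim.

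Concretely, I would instantiate Algorithm DINASC with $\gamma_0 = \mu^2/L$ and an arbitrary fixed $q \in (0,1)$, using the same forcing sequence $\{\eta_k\}$ and the same inexact directions $d^k$ satisfying \eqref{localcondc}. With $\gamma_k = \mu^2/L$ the step-size rule \eqref{adaptive_stepc} reduces precisely to \eqref{eq:exactalpha}, so $\alpha_k = \hat\alpha_k$ and the DINASC trial point $y^k - \alpha_k d^k$ equals $y^k - \hat\alpha_k d^k$. The only point to check is that the acceptance test at line 4 of DINASC never fails, so that $\gamma_k$ is never reduced. This is exactly the single-node ($N=1$) instance of Lemma \ref{le:finite}(i): since at every iteration $\gamma_k = \mu^2/L \le \mu^2/L$, one of \eqref{cond1c}, \eqref{cond2c} holds, the trial point is accepted, $\gamma_{k+1} = \gamma_k$, and the ``else'' branch is never entered. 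Hence $\gamma_k \equiv \mu^2/L$ for all $k$ (so $\bar m_1 = 0$), and the sequence generated by this instance of DINASC coincides with the $\{y^k\}$ of the statement.

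It then remains to apply Theorem \ref{th:centralized}. Its hypotheses are Assumptions \ref{ass:network}--\ref{ass:objetive} (assumed), $\gamma_0 > 0$ (here $\gamma_0 = \mu^2/L > 0$ by Assumption \ref{ass:objetive}), and $\{\eta_k\}$ nonincreasing with $\eta_k \le \bar\eta < 1$ (inherited from the setting of Theorem \ref{th:rate} to which the statement refers). This yields $\lim_k y^k = y^*$ together with the rate governed by $\{\eta_k\}$: linear when $\eta_k \le \bar\eta$ with $\bar\eta$ small enough, superlinear when $\eta_k \le \eta\|\nabla f(y^k)\|^\delta$ with $\delta \in (0,1)$, and quadratic when $\delta = 1$, exactly as in Theorem \ref{th:rate}. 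I do not anticipate a genuine obstacle here; the only step needing any care is the verification that the acceptance test never triggers a reduction of $\gamma_k$ under the choice $\gamma_k = \mu^2/L$ — this is where the assumed availability of $\mu$ and $L$ is used, and it is an immediate consequence of Lemma \ref{le:finite}(i). Everything else is a direct transcription of the distributed analysis to the single-node case, already asserted in Theorem \ref{th:centralized}.
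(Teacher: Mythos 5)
Your proposal is correct and follows essentially the same route as the paper: identify $\hat\alpha_k$ with the DINASC step size under the choice $\gamma_k=\mu^2/L$, invoke Lemma \ref{le:finite}(i) to conclude the acceptance test at line 4 always passes so that $\gamma_k$ is never reduced, and then transfer the conclusions of the DINAS analysis (Theorems \ref{th:main} and \ref{th:rate}) to the centralized sequence. No gaps.
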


\begin{proof}
The step size employed in DINASC algorithm reduces to $\hat\alpha_k$ whenever $\gamma_k = \mu^2/L$, while from part $i)$ of Lemma \ref{le:finite} we have that for this choice of $\gamma_k$ either condition \eqref{cond1c} or \eqref{cond2c} is always satisfied. That is, for the considered sequence, we have 
$$\|\nabla f(y^{k+1})\| \leq \|\nabla f(y^k)\| - \frac{1}{2}\frac{\mu^2}{L}\frac{(1-\eta_k)^2}{(1+\eta_k)^2}$$
for every $k$ such that $\hat\alpha_k<1$, and 
$$\|\nabla f(y^{k+1})\|\leq\eta_k\|\nabla f(y^k)\|+\frac{1}{2}\frac{L}{\mu^2}(1+\eta_k)^2\|\nabla f(y^k)\|^2$$
for every $k$ such that $\hat\alpha_k=1$. The thesis  follows directly from the analysis of DINAS.
\end{proof}

\begin{remark}
The previous corollary provides a choice of the step size that is accepted at all iterations. However, compared to $\hat\alpha_k$, the adaptive step size \eqref{adaptive_stepc}, i.e.  \eqref{eq:alpha} in DINAS, presents several advantages. First of all, the definition of $\hat\alpha_k$ involves the regularity constants $L$ and $\mu$, which are generally not known. Moreover, even when the constants are known,  $\hat\alpha_k$ could be very small, especially if in the initial iterations the gradient is large. 
The numerical experience so far implies that for a reasonable value of $ \gamma_0 $ we have a rather small number of rejections in Step 4 (Step 9 of DINAS)  and the step size is mostly accepted although $  \gamma_k >\mu^2/L$. Notice that when $\gamma_k>\mu^2/L$, the right hand sides of inequalities \eqref{cond1c} and \eqref{cond2c} are smaller than their equivalent for $\gamma_k=\mu^2/L.$ That is,  the adaptive step size generates  the decrease in the gradient is larger than the decrease induced by $\hat\alpha_k.$ 
\end{remark}

\section{Convergence analysis for DINAS: Consensus optimization}

Let us finally address the issue of convergence towards solutions of (\ref{eq:minsum_newton}) and (\ref{eq:penaltyref}). As already explained the solution of (\ref{eq:penaltyref}) is an approximate solution of (\ref{eq:minsum_newton}), and each local component $ \xbf_i^* $ of the penalty problem solution $ \xbf^* $ is in the $ \beta$- neighbourhood of  $ \ybf^* $ - the solution of (\ref{eq:minsum_newton}). So, one might naturally consider a sequence of penalty problem 
\be \label{eq:penaltyseq}
\min \Phi_{\beta_s} = F(x) + \frac{1}{2 \beta_s}\xbf^T (I - \calW) \xbf 
\ee
with a decreasing sequence $ \beta_{s} $ to reach the solution of (\ref{eq:minsum_newton}) with arbitrary precision and mimic the so-called exact methods discussed in Introduction. Thus we can solve each penalty method determined by $ \beta_{s} $ by DINAS up to a certain precision and use  a warm start strategy (taking the solution of the problem with $ \beta_{s} $ as the starting point for the problem with $ \beta_{s+1}$) to generate the solution of (\ref{eq:minsum_newton}). Naturally the exit criterion for each penalty problem and the way of decreasing $ \beta_{s} $ determine the properties of such sequence, in particular the rate of convergence of DINAS should be in line with the rate of decreasing $ \beta_{s}, $ to avoid oversolving of each penalty problem. 

This approach is similar to the penalty decomposition methods  in the centralized framework where one transforms the original problems, possibly with difficult constraints, into a sequence of problems that are easier to solve. The main difference is due to the change of computational framework - while in the penalty decomposition methods we distinguish between difficult and easy constraints, \cite {KL}, here we take exactly the same problem and change the value of $ \beta_s,$ to achieve suitable proximity of the solution (\eqref{eq:penaltyseq} to the solution of \eqref{eq:minsum_newton}. We demonstrate the efficiency of this approach, stated below as Algorithm SDINAS, in the next Section. 

\begin{alg}[SDINAS]\label{alg:penaltyDINAS}$\ $\\
\textbf{Input:} $\varepsilon_0,\beta_0>0,\ \hat\xbf^0\in\RnN, \theta\in(0,1).$
\begin{algorithmic}[1]
\FOR{$s=1,2,\dots$}
\STATE use DINAS starting at $\hat\xbf^{s-1}$ to find $\hat\xbf^s$ such that 
$\|\Phi_{\beta_s}(\hat\xbf^s)\|_\infty \leq\varepsilon_s $
\STATE set $\beta_{s+1} = \theta \beta_s$
\STATE set $\varepsilon_{s+1} = \theta\varepsilon_s$
\ENDFOR
\end{algorithmic}
\end{alg}

\begin{remark}
Different choices could  be made at lines 3 and 4 for the update of the penalty parameter $\beta_s$ and the tolerance $\varepsilon_s$. The fixed decrease proposed here is suitable for the distributed case as it does not require any additional communication among the nodes but the convergence theorem holds for more general   $\{\beta_s\}$ and $\{\varepsilon_s\}.$
\end{remark}

The following theorem shows that every accumulation point of the sequence generated by Algorithm \ref{alg:penaltyDINAS} is the solution of \eqref{eq:minsum_constrW}. Notice that the matrix $I-\calW$ is singular and thus the LICQ condition does not hold. Therefore we need to prove that an iterative sequence defined by Algorithm \ref{alg:penaltyDINAS} converges to the solution of \eqref{eq:penaltyref},  similarly to  \cite[Theorem 3.1]{EFIX}. The issue with LICQ is another property that is common with penalty decomposition method, \cite{KL}, where the constraint qualification might not hold and one needs to define alternative optimality conditions. 

\begin{theorem}
Let Assumptions \ref{ass:network} - \ref{ass:objetive} hold 
and let $\{\hat\xbf^s\}$ be a  sequence  such that 
$$\|\nabla \Phi_{\beta_s}(\hat\xbf^s)\|_\infty \leq\varepsilon_s.$$
If $\lim_{s\rightarrow+\infty}\beta_s = \lim_{s\rightarrow+\infty}\varepsilon_s = 0$, then every accumulation point of  $\{\hat\xbf^s\}$ satisfies the sufficient second order optimality conditions for problem \eqref{eq:minsum_constrW}.
\begin{proof}
Let $\bar\xbf$ be an accumulation point of $\{\hat\xbf^s\}$ and let $\calK_1\subseteq\N_0$ be an infinite subset such that $\lim_{k\in\calK_1}\hat\xbf^s = \bar\xbf.$
By definition of $\hat\xbf^s$ and $\Phi_{\beta_s}$, we have 
\begin{equation*}
    \begin{aligned}
        \varepsilon_{\beta_s}&\geq\|\nabla\Phi_{\beta_s}(\hat\xbf^s)\|_\infty
        \geq
        \frac{1}{\beta_{\beta_s}}\|(I-\calW)\hat\xbf^s\|_\infty-\|\nabla F(\hat\xbf^s)\|_\infty ,
    \end{aligned}
\end{equation*}
which implies 
\begin{equation}\label{aux1}
\|(I-\calW)\hat\xbf^s\|_\infty \leq \beta_s(\varepsilon_s+\|\nabla F(\hat\xbf^s)\|_\infty ).\end{equation}
Since $\nabla F$ is bounded over $\{\hat\xbf^s\}_{\calK_1}$, and $\beta_s$ tends to zero,
we get 

$$\|(I-\calW)\bar\xbf\|_\infty  \leq \lim_{k\in\calK_1}\beta_s(\varepsilon_s+\|\nabla F(\hat\xbf^s)\|_\infty ) = 0,$$
which also implies that $(I-\calW)^{1/2}\bar\xbf = 0$ and therefore $\bar\xbf$ is a feasible point for \eqref{eq:minsum_constrW}.
Let us now define for every $s\in\N_0$ the vectors $$\vbf_s = \frac{1}{\beta_s}(I-\calW)^{1/2}\hat\xbf^s,
 \; \zbf_s = \frac{1}{\beta_s}(I-\calW)\hat\xbf^s.$$
We will prove that $\{\vbf^s\}_{s\in\calK_1}$ is bounded. Let $I-\calW = U\Lambda U\tr$ be the eigendecomposition of $I-\calW$, with $\Lambda = \dia(\lambda_1,\dots,\lambda_{nN})$. From \eqref{aux1} we have that $\{\zbf^s\}_{s\in\calK_1}$ is bounded. That is, there exists $Z\in\R$ such that $\|\zbf^s\|\leq Z$ for every $s\in\calK_1$. Since $U$ is an orthogonal matrix, by definition of $\zbf$ we have
\begin{equation*}
        Z \geq\|\zbf^s\|\geq\|U\tr\zbf^s\| = \frac{1}{\beta_s}\|U\tr U\Lambda U\tr\hat\xbf^s\| = \frac{1}{\beta_s}\|\Lambda U\tr\hat\xbf^s\|=\frac{1}{\beta_s}\lr{\sum_{i=1}^{nN}\lambda_i^2 (U\tr\hat\xbf^s)^2_i }^{1/2}
\end{equation*}
Since $\lambda_i\geq0$ for every $i$, this implies that  $ \{\frac{1}{\beta_s}\lambda_i^2 (U\tr\xbf^s)^2_i\}_{s\in\calK_1}$ is bounded for every $i$ and therefore $ \{\frac{1}{\beta_s}\lambda_i (U\tr\xbf^s)^2_i\}_{s\in\calK_1}$ is also bounded. By definition of $\vbf^s$ we get
\begin{equation*}
    \begin{aligned}
        \frac{1}{\beta_s}\lr{\sum_{i=1}^{nN}\lambda_i (U\tr\xbf^s)_i }^{1/2} &=  \frac{1}{\beta_s}\|\Lambda^{1/2} U\tr\xbf\|=\|U\tr\vbf^s\|.
    \end{aligned}
\end{equation*}
The above equality implies that $\{\vbf^s\}_{s\in\calK_1}$ is bounded and therefore there exists $\bar\vbf\in\RnN$ and $\calK_2\subseteq\calK_1,$ an infinite subset such that
$\lim_{k\in\calK_2}\vbf^s = \bar\vbf$. By definition of $\hat\xbf^s$, $\Phi_s$, and $\vbf^s$ we have 
\begin{equation*}
    \begin{aligned}
        \varepsilon_s&\geq\|\nabla\Phi_s(\hat\xbf^s)\|_\infty  = \left\|\nabla F(\hat\xbf^s)+\frac{1}{\beta_s}(I-\calW)\hat\xbf^s\right\|_\infty    =
        \left\|\nabla F(\hat\xbf^s)+(I-\calW)^{1/2}\vbf^s\right\|_\infty .
    \end{aligned}
\end{equation*}
Taking the limit for $s\in\calK_2$ we get 
$$\left\|\nabla F(\bar\xbf)+(I-\calW)^{1/2}\bar\vbf\right\|_\infty  = 0,$$ 
and thus $\bar\xbf$ is  satisfies the KKT conditions for \eqref{eq:minsum_constrW}.
Denoting with $\cal L$ the Lagrangian function of problem \eqref{eq:minsum_constrW}, by Assumption \ref{ass:objetive} we have that $\nabla^2_{\xbf\xbf}{\cal L}(\bar\xbf,\bar\vbf)$ is positive definite, and therefore we get the thesis.
\end{proof}
\end{theorem}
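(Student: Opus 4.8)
The plan is to follow the classical external penalty argument, adapted to the fact that the constraint map $\xbf\mapsto(I-\calW)^{1/2}\xbf$ is rank deficient, so that no constraint qualification (LICQ) is available. Fix an accumulation point $\bar\xbf$ of $\{\hat\xbf^s\}$ and an infinite index set $\calK_1\subseteq\N_0$ with $\lim_{s\in\calK_1}\hat\xbf^s=\bar\xbf$. First I would establish that $\bar\xbf$ is feasible for \eqref{eq:minsum_constrW}. Writing $\nabla\Phi_{\beta_s}(\hat\xbf^s)=\nabla F(\hat\xbf^s)+\tfrac{1}{\beta_s}(I-\calW)\hat\xbf^s$ and using the hypothesis $\|\nabla\Phi_{\beta_s}(\hat\xbf^s)\|_\infty\le\varepsilon_s$, one gets $\|(I-\calW)\hat\xbf^s\|_\infty\le\beta_s\bigl(\varepsilon_s+\|\nabla F(\hat\xbf^s)\|_\infty\bigr)$; since $\nabla F$ is continuous and hence bounded along the convergent subsequence while $\beta_s,\varepsilon_s\to0$, letting $s\in\calK_1$ tend to infinity yields $(I-\calW)\bar\xbf=0$, which is equivalent to $(I-\calW)^{1/2}\bar\xbf=0$ because $I-\calW$ is positive semidefinite.

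The step I expect to be the main obstacle, and where the failure of LICQ must be worked around, is the boundedness of the multiplier estimates $\vbf_s:=\tfrac{1}{\beta_s}(I-\calW)^{1/2}\hat\xbf^s$ along $\calK_1$. The bound derived above already gives that $\zbf_s:=\tfrac{1}{\beta_s}(I-\calW)\hat\xbf^s$ is bounded. Using the eigendecomposition $I-\calW=U\Lambda U\tr$ with $\Lambda=\dia(\lambda_1,\dots,\lambda_{nN})$ and $\lambda_i\ge0$, boundedness of $\zbf_s$ means that each coordinate $\tfrac{1}{\beta_s}\lambda_i\,(U\tr\hat\xbf^s)_i$ is bounded. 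The key observation is that $\vbf_s$ lies in the range of $(I-\calW)^{1/2}$, so its coordinates in the $U$ basis vanish whenever $\lambda_i=0$, while for $\lambda_i>0$ one has $(U\tr\vbf_s)_i=\lambda_i^{-1/2}\,(U\tr\zbf_s)_i$, which is bounded. Hence $\{\vbf_s\}_{s\in\calK_1}$ is bounded, and I can extract a further infinite set $\calK_2\subseteq\calK_1$ with $\vbf_s\to\bar\vbf$ for some $\bar\vbf\in\RnN$.

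With the multiplier in hand, I would pass to the limit along $\calK_2$ in the identity $\nabla\Phi_{\beta_s}(\hat\xbf^s)=\nabla F(\hat\xbf^s)+(I-\calW)^{1/2}\vbf_s$; using $\varepsilon_s\to0$ and continuity of $\nabla F$, this gives $\nabla F(\bar\xbf)+(I-\calW)^{1/2}\bar\vbf=0$, which together with the feasibility established in the first step is exactly the KKT system of \eqref{eq:minsum_constrW} with associated multiplier $\bar\vbf$. Finally, for the second order sufficient condition, the Lagrangian of \eqref{eq:minsum_constrW} is ${\cal L}(\xbf,\vbf)=F(\xbf)+\vbf\tr(I-\calW)^{1/2}\xbf$, so $\nabla^2_{\xbf\xbf}{\cal L}(\bar\xbf,\bar\vbf)=\nabla^2F(\bar\xbf)$, which is positive definite by Assumption \ref{ass:objetive}. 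Being positive definite on all of $\RnN$, it is in particular positive definite on the critical cone of \eqref{eq:minsum_constrW}, so the second order sufficient optimality conditions hold at $\bar\xbf$, which completes the proof. In summary, the only genuinely delicate point is the boundedness of $\{\vbf_s\}$: by construction $\vbf_s$ belongs to the range of $(I-\calW)^{1/2}$, on which $(I-\calW)^{1/2}$ is boundedly invertible, and this is what keeps the multiplier sequence compact despite the absence of a constraint qualification.
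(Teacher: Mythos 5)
Your proposal is correct and follows essentially the same route as the paper: feasibility of the limit point from the penalty bound, boundedness of the multiplier estimates $\vbf_s$ via the eigendecomposition of $I-\calW$, passage to the limit in the stationarity identity $\nabla\Phi_{\beta_s}(\hat\xbf^s)=\nabla F(\hat\xbf^s)+(I-\calW)^{1/2}\vbf_s$, and the second-order condition from strong convexity of $F$. The only local difference is your treatment of the boundedness of $\{\vbf_s\}$ --- noting that $\vbf_s$ lies in the range of $(I-\calW)^{1/2}$, on which that operator is boundedly invertible, so $(U\tr\vbf_s)_i=\lambda_i^{-1/2}(U\tr\zbf_s)_i$ when $\lambda_i>0$ and vanishes otherwise --- which is more direct than the paper's componentwise chain of estimates and closes cleanly without the extra bookkeeping on powers of $\beta_s$ that the paper's version implicitly requires.
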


\section{Numerical Results}\label{sec:NEWTON4}

We now present a set of numerical results to investigate the behavior of DINAS when solving \eqref{eq:penalty-intro} and  \eqref{eq:minsum_newton} and how it compares with relevant methods from the literature.  

\subsection{Numerical results for distributed personalized optimization} 
Given that the choice of the forcing terms  forcing terms ${\eta_k}$ influences the performance of the method , we begin by numerically veryfing theoretical results on the convergence rate. Consider the problem of minimizing a logistic loss function with $l_2$ regularization. That is, given $\{\textbf a_j\}_{j=1}^m\subset\R^n,\ \{b_j\}_{j=1}^m\subset\{-1,1\}, \rho>0$, the objective function $f$ is defined as 
\begin{equation}\label{logreg1}
f(\ybf) = \sum_{j=1}^m\ln\left(1+\exp(-b_j\textbf{a}_j\tr \ybf)\right) + \frac12\rho\|\ybf\|^2_2
\end{equation}
We set $n = 100$, $m=1000$ and assume that node $i$ holds   $\{\textbf a_j\}_{j\in{\cal R}_i}$, $\{b_j\}_{j\in{\cal R}_i}$ for  $\mathcal R_i = \{(i-1) 100+1,\ldots,100i\}$
For every $j=1,\dots,m$ the components of $\textbf{a}_j$ are independent and uniformly drawn from $(0,1),$ while $b_j$ takes value $1$ or $-1$ with equal probability, while the regularization parameter is $\rho = 0.01m.$ The underlying communication network is defined as a random geometric graph with communication radius $\sqrt{N^{-1}\ln(N)}$, and the consensus matrix $W$ as the Metropolis matrix \cite{metropolis}. 
To evaluate the methods, we define the per-iteration total cost of each method as the sum of the computational cost plus the communication traffic multiplied by a scaling constant $r$, \cite{cost}. That is, 
\begin{equation}\label{totalcost}
    \text{total cost} = \text{computation} + r\cdot\text{communication}
\end{equation}
The computational cost is expressed in terms of scalar operations, while the communication traffic is the total number of scalar quantities shared by all nodes. The scaling factor $r$ is introduced to reflect the fact that the time necessary to share a variable between two nodes compared with the time necessary to execute scalar computations depends on many factors of technical nature, such as the kind of computational nodes that form the network and the technology they use to communicate, that are beyond the purpose of these experiments.  

\begin{figure}[h]
\centering
    \begin{subfigure}[b]{0.40\textwidth}
    \centering
        \includegraphics[width = 0.98\textwidth]{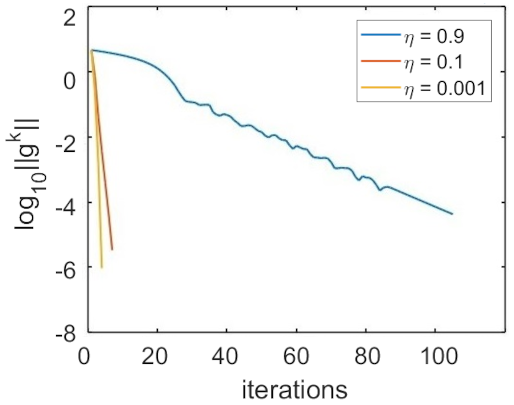}
 \caption{$\delta = 0$}\label{fig:NT0_1}
    \end{subfigure}
    \begin{subfigure}[b]{0.40\textwidth}
    \centering
        \includegraphics[width = \textwidth]{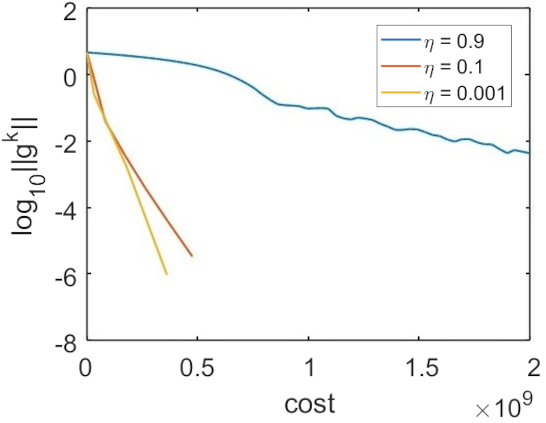}
 \caption{$\delta = 0$}\label{fig:NT0_2}
    \end{subfigure}

        \begin{subfigure}[b]{0.40\textwidth}
    \centering
        \includegraphics[width = \textwidth]{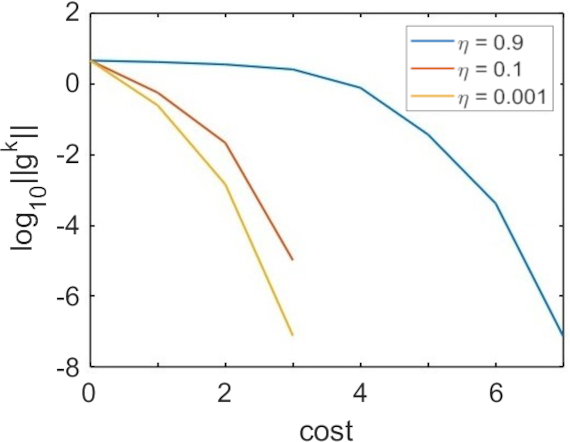}
 \caption{$\delta = 1$}\label{fig:NT0_3}
    \end{subfigure}
    \begin{subfigure}[b]{0.40\textwidth}
    \centering
        \includegraphics[width = \textwidth]{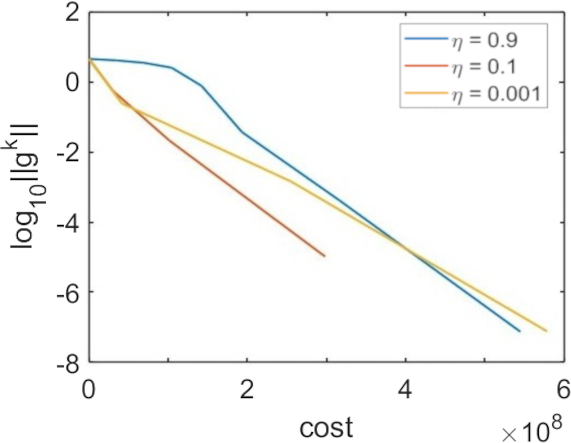}
 \caption{$\delta = 1$}\label{fig:NT0_4}
    \end{subfigure}
       \caption{Choice of the forcing terms, Logistic Regression }\label{fig:NT0}
\end{figure}

Given  $f$ in (\ref{logreg1}) as explained above and $\beta = 0.1$, the personalized optimization problem in the form of  \eqref{eq:penaltyref} is solved with DINAS algorithm for different choices of the sequence of forcing terms $\{\eta_k\}, $ defined as 
$$\eta_k = \min\{\eta, \eta\|\gbf^k\|_\infty ^\delta\}, $$ 
for $\delta=0,1$ and $\eta = 0.9,0.1,0.001$.
All nodes start with initial guess $\xbf^0_i = 0\in\Rn$ and the execution terminates when $\|\nabla\Phi_\beta(\xbf^k)\|\leq 10^{-5}.$ For all the methods we define $\gamma_0 = 1.$ In Figure \ref{fig:NT0} we plot the results for the six methods given by the different combinations of $ \delta $ and $ \eta.$ In Figure \ref{fig:NT0_1}, \ref{fig:NT0_2} we consider the case $\delta = 0$ (that is, $\eta_k=\eta$ for all $k$), while in Figure \ref{fig:NT0_3},\ref{fig:NT0_4} we have $\delta=1$. In each subfigure we plot the value of $\log_{10}(\|\gbf^k\|)$ versus iterations (Figure \ref{fig:NT0_1}, \ref{fig:NT0_3}) and cost (\ref{fig:NT0_2}, \ref{fig:NT0_4}), with scaling factor $r=1$. 
Figures \ref{fig:NT0_1}, \ref{fig:NT0_3} confirm the results stated in Theorem \ref{th:rate}: the sequence $\|\gbf^k\|$ is linearly decreasing for all the considered choices of  $\eta$, while for $\delta=1$ the convergence is locally quadratic. 
For both values of $\delta$ the number of iterations required by the methods to arrive at termination depends directly on the choice of the forcing term: smaller values of $\eta$ ensure the stopping criterion is satisfied in a smaller number of iterations. However, for $\delta=1$ we notice that, when compared in terms of overall cost, the method with the smallest value of $\eta$ performs worse than the other two. For $\delta=0$ the comparison among the methods for the cost gives the same result as that in terms of iterations. The results for different values of the cost scaling factor $r$ are completely analogous and are therefore omitted here.

\subsection{Comparison with Exact Methods for consensus optimization}
We compare DINAS with NN \cite{NetworkNewton}, DAN and DAN-LA \cite{flooding}, Newton Tracking \cite{Ntrack}, DIGing \cite{diging} and EXTRA \cite{extra}.  The proposed method DINAS is designed to solve the penalty formulation of the problem and  therefore, in order to minimize \eqref{eq:minsum_newton}, we apply Algorithm \ref{alg:penaltyDINAS} with $\beta_0 = 0.1$, $\beta_{s+1}  = 0.1 \beta_s$ and $\varepsilon_s = 0.01\beta_s$. For NN we proceed analogously, replacing DINAS in line 2 with Network Newton. All other methods are the so-called exact methods, and therefore can be applied directly to minimize $f$. We take $\gamma_0 = 1$, $\delta = 0$ and $\eta = 0.9$ in Algorithm \ref{alg:DINAS}, i.e., we consider linearly convergent DINAS, while for all other methods the step sizes are computed as in the respective papers. For DAN-LA the constants are set as in \cite{flooding}. In particular, we consider four different values of the parameter $c = M,10M,100M,1000M$  and denote them as DAN LR1,DAN LR2, DAN LR3, DAN LR4 in the figures bellow. 

First, we consider a logistic regression problem with the same parameters as in the previous test. The exact solution $\ybf^*$ of \eqref{logreg1} is computed by running the classical gradient method with tolerance $10^{-8}$ on the norm of the gradient. As in \cite{NetworkNewton}, the methods are evaluated considering the average squared relative error, defined as 
$$e_k = \frac{1}{N}\sum_{i=1}^N\frac{\|\xbf^k_i-\xbf^*\|^2}{\|\xbf^*\|^2}$$
where $\xbf^* = (\ybf^*,\dots,\ybf^*)\tr\in\RnN.$
For all methods the initial guess is $\xbf^0_i=0$ at every node, which yields $e_0 = 1$, and the execution is terminated when $e^k\leq 10^{-4}.$ We consider the same combined measure of computational cost and communication defined in \eqref{totalcost}, with scaling factor $r=0.1,1,10$ and plot the results in Figure \ref{fig:NT1}.

\begin{figure}[h]
\centering
    \begin{subfigure}{0.327\textwidth}
    \centering
        \includegraphics[width = \textwidth]{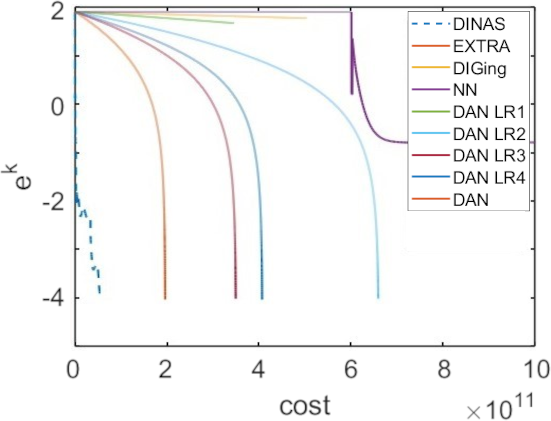}
 \caption{$r=0.1$}\label{fig:NT1_1}
    \end{subfigure}
    \begin{subfigure}{0.327\textwidth}
    \centering
        \includegraphics[width = \textwidth]{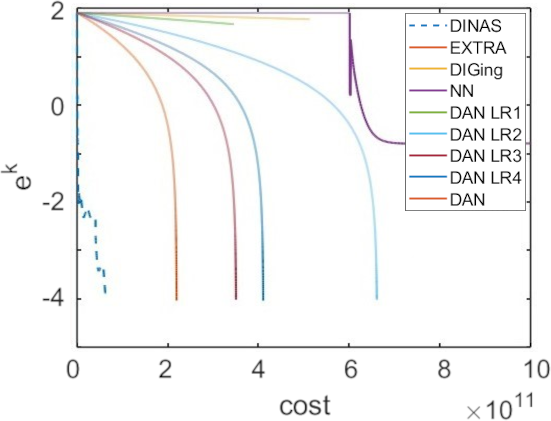}
 \caption{$r=1$}\label{fig:NT1_2}
    \end{subfigure}
        \centering
        \begin{subfigure}{0.327\textwidth}
      \centering
        \includegraphics[width = \textwidth]{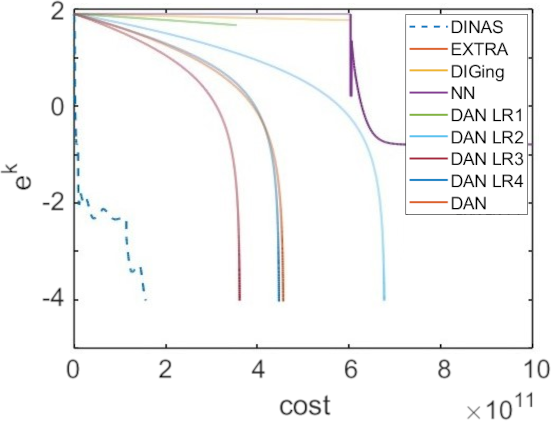}
 \caption{$r=10$}\label{fig:NT1_3}
    \end{subfigure}
   
    \caption{Total cost, Logistic Regression }\label{fig:NT1}
\end{figure}
One can see that for all values of $r$ DINAS outperforms all the other methods. NN, DIGing and EXTRA all work with fixed step sizes that, in order to ensure global convergence of the methods, need to be very small. Despite the fact that each iteration of DIGing and EXTRA is very cheap compared to an iteration of DINAS, this is not enough to compensate the fact that both these methods require a large number of iterations to arrive at termination.  DAN and DAN-LA methods  use an adaptive step size that depends on the constants of the problem $\mu$ and $L$ and on $1/\|\nabla \Phi^k\|_\infty$ in such a way that the full step size is accepted when the solution is approached. In fact, we can clearly see from the plots that all these methods reach a quadratic phase where $e_t$ decreases very quickly. However, the per-iteration cost of these methods is, in general, significantly higher than the cost of DINAS. DAN method requires all the local Hessians $\nabla^2 f_i(\xbf^k)$ to be shared among all the nodes at each iteration. While using Algorithm \ref{alg:DSF} this can be done in a finite number of rounds of communications, the overall communication traffic is large as it scales quadratically with both the dimension $n$ of the problem and the number of nodes $N$. DAN-LA avoids the communication of matrices by computing and sharing the rank-1 approximations of the local Hessians. While this reduces significantly the communication traffic of the method, it increases the computational cost, as two eigenvalues and one eigenvector need to be computed by every node at all iterations, and the number of iterations, since the direction is computed using an approximation of the true Hessian. Overall, this leads to a larger per-iteration cost than DINAS. Since $\gamma_0=1$ and it only decreases when the conditions \eqref{cond1},\eqref{cond2} do not hold, we have that $\alpha_k$ in DINAS is relatively large compared to the fixed step sizes employed by the other methods that we considered. The per-iteration cost of DINAS is largely dominated by the cost of JOR that we use to compute the direction $\dbf^k$. Since the method is run with $\eta_k$ large, and $\dbf^{k-1}$ is used as initial guess at the next iteration, a small number of JOR iteration is needed on average to satisfy \eqref{eq:residual}, which makes the overall computational and communication traffic of DINAS small compared to DAN and DAN-LA.

The  logistic regression problem is also solved with  Voice rehabilitation dataset - LSVT,\cite{lsvt}. The dataset is made of $m=126$ points with $n=309$ features, and the datapoints are distributed among $N=30$ nodes on a random network generated as above. The results for different values of $r$ are presented in Figure \ref{fig:LSVT}, and they are completely analogous to those obtained for the synthetic dataset in Figure \ref{fig:NT1}. 
\begin{figure}[h]
\centering
    \begin{subfigure}[b]{0.327\textwidth}
    \centering
        \includegraphics[width = \textwidth]{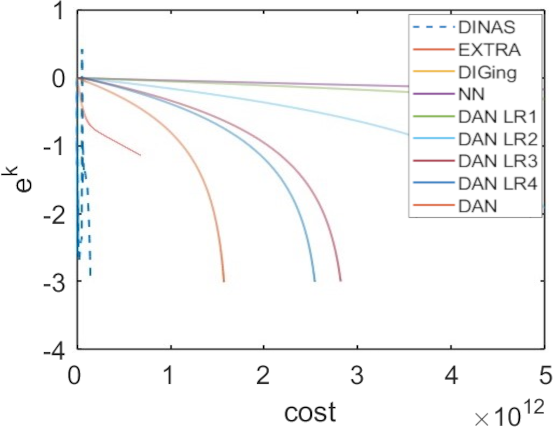}
 \caption{$r=0.1$}
    \end{subfigure}
    \begin{subfigure}{0.327\textwidth}
    \centering
        \includegraphics[width = \textwidth]{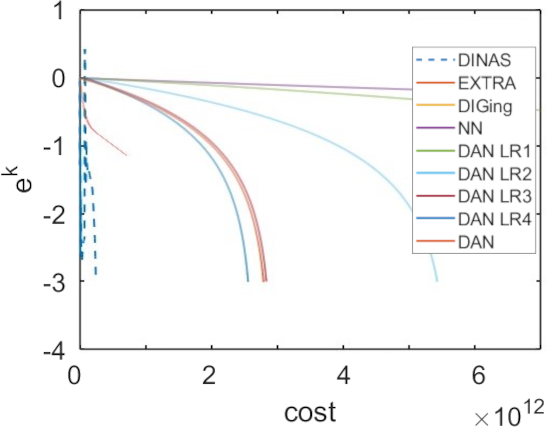}
 \caption{$r=1$}
    \end{subfigure}
       \centering
        \begin{subfigure}{0.327\textwidth}
      \centering
        \includegraphics[width = \textwidth]{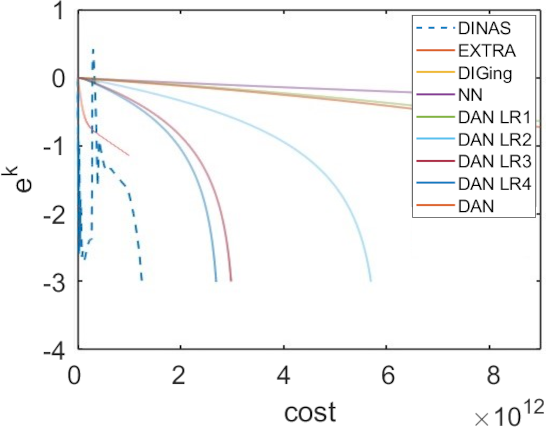}
 \caption{$r=10$}
    \end{subfigure}
   
    \caption{Total cost, Logistic Regression, LSVT dataset }\label{fig:LSVT}
\end{figure}

To investigate the influence of conditional number we consider a quadratic problem defined as  
\begin{equation}\label{minquad}
    f(\ybf) = \sum_{i=1}^N f_i(\ybf),\enspace f_i(\ybf) = \ybf\tr A_i\ybf + \ybf\tr\bbf_i 
\end{equation}
with $A_i\in\Rnn$, $b\in\Rn$ for every $i=1,\dots,N.$ We take $n=100$ and $N=10$ and we generate $A_i, b_i$ as follows.
Given $0<\lambda_{\min}<\lambda_{\max}$, we define the diagonal matrix $D_i = \dia(\lambda^i_1,\dots,\lambda^i_n)$ 
where the scalars $\lambda^i_j$ are independent and uniformly sampled in $[\lambda_{\min},\lambda_{\max}].$ 
Given a randomly generated orthogonal matrix $P_i\in\Rnn$ we define $A_i = P_iD_iP_i\tr.$ For every $i=1,\dots,n$ 
the components of $\bbf_i$ are independent and from the uniform distribution in $[0,1].$ Fixing  $\lambda_{\min} = 0.1$ different problems of the form \eqref{minquad} with increasing values of $\lambda_{\max}$ are considered. 
For each problem the exact solution $\ybf^*$, the initial guess and the termination condition are all set as in the previous test. The same combined measure of the cost, with scaling factor $r$ is used.  All methods are run with step sizes from the respective papers, while for NN we use step size equal to 1, as suggested in \cite{NetworkNewton} for quadratic problems. In Figure \ref{fig:quad} we plot the obtained results for $\lambda_{\max} = 1,10,100$ and $r=0.1,10$.

\begin{figure}[h]
\centering
    \begin{subfigure}[b]{0.327\textwidth}
    \centering
        \includegraphics[width = \textwidth]{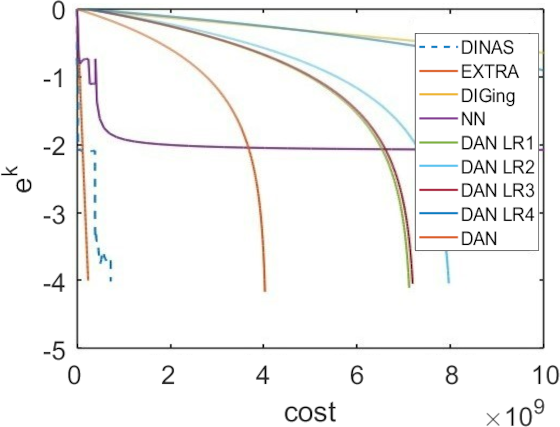}
 \caption{$\lambda_{\max}=1,\ r=0.1$}
    \end{subfigure}
    \begin{subfigure}{0.327\textwidth}
    \centering
        \includegraphics[width = \textwidth]{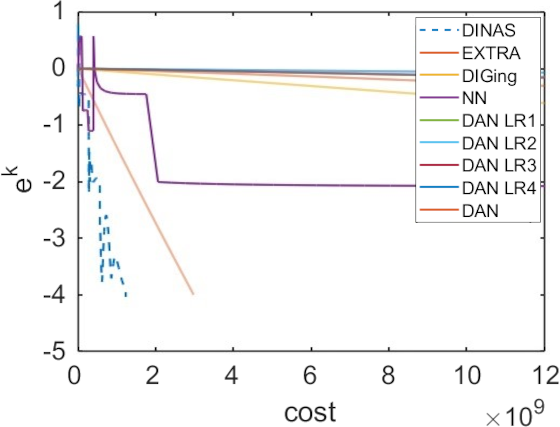}
 \caption{$\lambda_{\max}=10,\ r=0.1$}
    \end{subfigure}
       \centering
        \begin{subfigure}{0.327\textwidth}
      \centering
        \includegraphics[width = \textwidth]{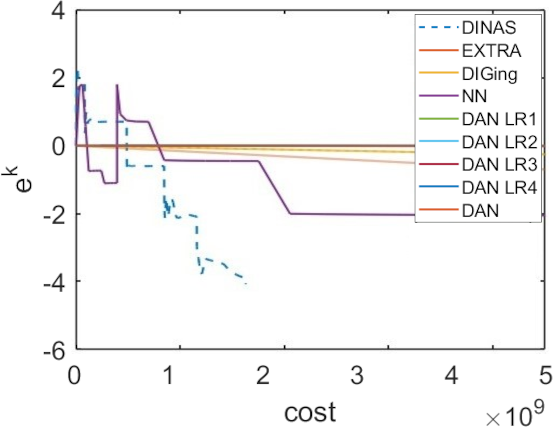}
 \caption{{$\lambda_{\max}=100,\ r=0.1$}}
    \end{subfigure}

    \begin{subfigure}[b]{0.327\textwidth}
    \centering
        \includegraphics[width = \textwidth]{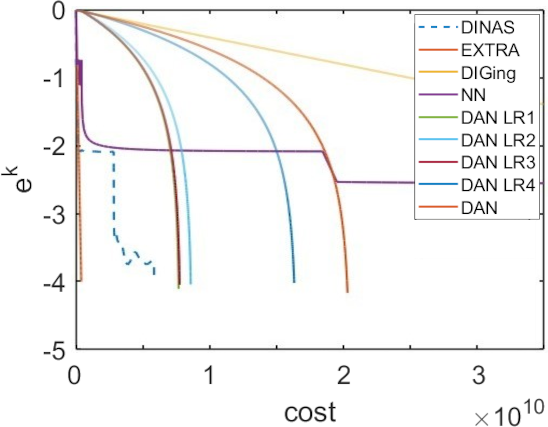}
 \caption{$\lambda_{\max}=1,\ r=10$}
    \end{subfigure}
    \begin{subfigure}{0.327\textwidth}
    \centering
        \includegraphics[width = \textwidth]{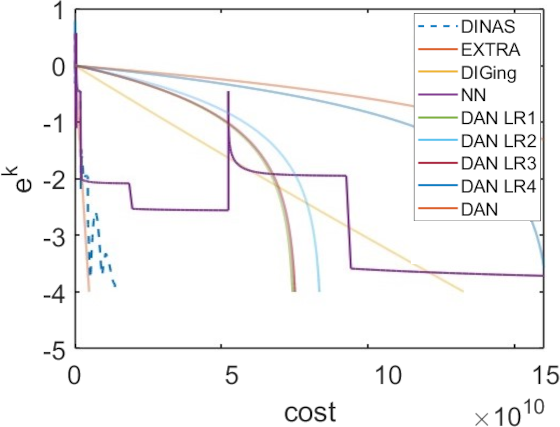}
 \caption{$\lambda_{\max}=10,\ r=10$}
    \end{subfigure}
       \centering
        \begin{subfigure}{0.327\textwidth}
      \centering
        \includegraphics[width = \textwidth]{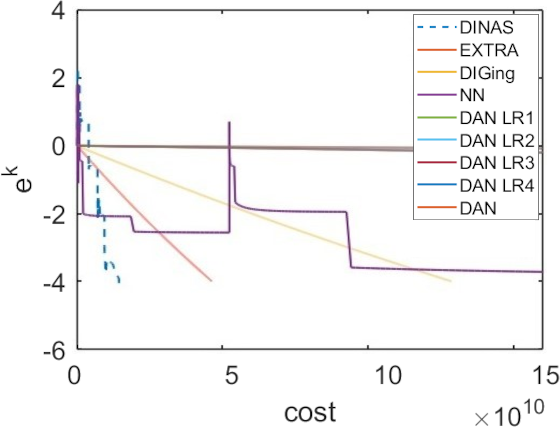}
 \caption{$\lambda_{\max}=100,\ r=10$}
    \end{subfigure}
   
    \caption{Total cost, quadratic problem }\label{fig:quad}

\end{figure}

For this set of problems the advantages of DINAS, compared to the other considered methods, become more evident as $\lambda_{\max}$ increases. When $\lambda_{\max}$ is larger, the Lipschitz constant of the problem also increases and therefore the step sizes that ensure convergence of DIGing and EXTRA become progressively smaller. In fact we can see that EXTRA outperforms the proposed method for $\lambda_{\max} = 1$ when the cost is computed with $r=0.1$ and for $\lambda_{\max} = 10$ when $r=10$, but DINAS becomes more efficient for larger values of $\lambda_{\max}.$ Regarding DAN and DAN-LA, what we noticed for the previous test also holds here. Moreover, their step size depends on the ratio $\mu^2/L$ which, for large values of $\lambda_{\max}$ causes the step size to be small for many iterations. While NN uses the full step size in this test,  its performance is in general more influenced by the condition number of the problem than that of DINAS. Moreover, while the per-iteration communication traffic of NN is fixed and generally lower than that of DINAS, the computational cost is typically larger, as at each iteration every node has to solve multiple linear systems of size $n$, exactly. Finally, we notice that for all the considered values of $\lambda_{\max}$ the comparison between DINAS and the other method is better for $r = 0.1$ which is a direct consequence of assigning different weight to the communication traffic when computing the overall cost.

\section{Conclusions}
The results presented here extend the classical theory of Inexact Newton methods to the distributed framework in the following aspects. An adaptive (large) step size selection protocol is proposed that yields global convergence. When solving personalized distributed optimization problems, the rate of convergence is governed by the forcing terms as in the classical case, yielding linear, superlinear or quadratic convergence with respect to computational cost. The rate of convergence with respect to the number of communication rounds is linear. The step sizes are adaptive, as in \cite{Polyak} for the Newton method, and they can be computed in a distributed way with a minimized required knowledge of global constants beforehand. For distributed consensus optimization, exact global convergence is achieved. The advantages of the proposed DINAS method in terms of computational and communication costs with respect to the state-of-the-art methods are demonstrated through several numerical examples, including large- scale and ill-conditioned problems. Finally, a consequence of the analysis for the distributed case is also convergence theory for a centralized setting, wherein adaptive step sizes and an inexact Newton method with arbitrary linear solvers is analyzed, hence extending the results in \cite{Polyak} to inexact Newton steps.

\bibliographystyle{plain}
\bibliography{references}
\end{document}